\newcommand*{\tightdisplaymath}{\abovedisplayskip\z@\belowdisplayskip\z@}
\newcommand{\vers}{Some properties of the $\mbox{A}_{\infty}$-nerve}
\title[\vers]{Some properties of the $\mbox{A}_{\infty}$-nerve}
\author{Mattia Ornaghi}
\address{\parbox{0.9\textwidth}{Universit\`a degli Studi di Milano\\
Dipartimento di Matematica\\
Via Cesare Saldini 50, 20133 Milano, Italy}}
\email{mattia12.ornaghi@gmail.com}
\theoremstyle{definition}
\newtheorem{defn}{Definition}[section]
\newtheorem{thm}{Theorem}[section]
\newtheorem{lem}[thm]{Lemma}
\newtheorem{prp}[thm]{Proposition}
\newtheorem{cor}[thm]{Corollary}
\newtheorem*{namedthm}{Theorem}
\theoremstyle{remark}
\newtheorem{rem}{Remark}[section]
\newtheorem{exmp}{Example}[section]
\newcommand{\Ain}{\mbox{A$_{\infty}$}}
\newcommand{\aCat}{\mbox{A$_{\infty}$Cat}}
\newcommand{\DgCat}{\mbox{DgCat}}
\newcommand{\A}{\mathscr{A}}
\newcommand{\B}{\mathscr{B}}
\newcommand{\F}{\mathscr{F}}
\thanks{The author was supported by the research project FARE 2018 HighCaSt (grant number R18YA3ESPJ) and by ERC Advanced
Grant-101095900-TriCatApp}
\begin{document}

\date{\today}

\maketitle

\begin{abstract}
The aim of this paper is to prove that the A$_{\infty}$-nerve of two quasi-equivalent A$_{\infty}$-categories (linear over a commutative ring) are weak-equivalent in the Joyal model structure.\ As a consequence we prove that the A$_{\infty}$-nerve of a pretriangulated A$_{\infty}$-category is a stable $\infty$-category.
\end{abstract}

\setcounter{tocdepth}{1}

\section{Introduction}

The notion of \emph{triangulated category} was developed in the 60s by Jean-Louis Verdier, under the guidance of Alexandre Grothendieck,
in order to capture the additional structure on the derived category of an abelian category.\
Nowadays triangulated categories played an important role in algebraic geometry, even though they have some drawbacks, for example the non-functoriality of the mapping cone or the non-existence of homotopy colimits and homotopy limits.\\

For this purpose, in the 90s it was developed the notion of \emph{pretriangulated envelope} of a \emph{differential graded category} (from now on \emph{dg-category}), and of a $\mbox{A}_\infty$-category.\ Roughly speaking, pretriangulated dg-categories (resp.\ $\mbox{A}_\infty$-categories) are dg-categories (resp.\ $\mbox{A}_\infty$-categories) whose homotopy category is "canonically" triangulated.\ It means that pretriangulated dg and $\mbox{A}_\infty$-categories can be seen as "enhanced" triangulated categories.\ A more recent way to enhance a triangulated category is via a stable $\infty$-category.\ 
More precisely, a stable $\infty$.category is a pointed $\infty$-category that is complete and closed under loop spaces, whose homotopy category is triangulated.\\

It is a folklore belief (see for example \cite[\S 2.1.1]{BFN}\footnote{note that in this paper $\mathbb{K}$ is a field of characteristic zero}) that, over a commutative ring $\mathbb{K}$, the notions of ($\mathbb{K}$-linear) pretriangulated $\mbox{A}_{\infty}$-categories, pretriangulated dg-categories and stable $\infty$-categories are equivalent, under suitable localization.\ Unfortunately we cannot find any satisfying reference in the existing literature.\\ 

Regarding the category of pretriangulated dg-categories linear over a commutative ring $\mathbb{K}$, in 2013 Lee Cohn (cf. \cite{Coh}) proves that the nerve of the category of dg-categories localized on Morita equivalences is $\infty$-equivalent to the $\infty$-category of stable idempotent complete $\infty$-categories enriched over the Eilenberg-MacLane spectra $H\mathbb{K}$ (see also \cite{Doni}). 
This fact proves that the categorical nerve of the category of dg-categories (localizing on Morita equivalence) is equivalent to an idempotent complete stable $\infty$-category.
The problem is that the strategy used by Cohn does not extend to pretriangulated $\mbox{A}_{\infty}$-categories. However in 2015 Giovanni Faonte proved that, when $\mathbb{K}$ is a field, the dg-nerve of a pretriangulated dg-category (in the sense of \cite{BoKa}) is a stable $\infty$-category.\\

The first aim of the present work is to extend the same result to pretriangulated $\mbox{A}_{\infty}$-categories (resp. dg-categories), linear over a commutative ring $\mathbb{K}$.\ 
The second goal is to investigate some new possibilities offered by the $\mbox{A}_{\infty}$-nerve defined by Giovanni Faonte and Jacob Lurie.\ In particular, we clarify the relationship between the dg-nerve and the A$_\infty$-nerve.\ We will prove:

\begin{namedthm}[\textbf{\ref{M}}]
The $\mbox{A}_{\infty}$-nerve sends quasi-equivalences of (strictly unital) $\mbox{A}_{\infty}$-categories in weak-equivalences of $\infty$-categories. 
\end{namedthm}
 
\begin{namedthm}[\textbf{\ref{Ttr}}]
Let $\mathscr{A}$ be a pretriangulated $\mbox{A}_{\infty}$-category then $\mbox{N}_{\tiny{\mbox{A}}_{\infty}}(\mathscr{A})$ is a stable $\infty$-category.\ The functor induced in the homotopy categories is an equivalence of triangulated categories.\ Moreover, $\mathscr{A}$ is idempotent complete if and only if $\mbox{N}_{\tiny{\mbox{A}}_{\infty}}(\mathscr{A})$ is an idempotent complete stable $\infty$-category.
\end{namedthm}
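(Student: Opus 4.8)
The plan is to reduce the statement to the differential-graded case, where it is already available through Faonte's theorem, by strictifying $\mathscr{A}$ to a dg-category and transporting everything along the weak equivalence produced by Theorem \ref{M}. Concretely, I would first invoke the classical strictification due to Fukaya and Seidel: since a pretriangulated $A_\infty$-category is in particular cohomologically unital, the $A_\infty$-Yoneda functor $\mathscr{A}\to\mathrm{mod}\text{-}\mathscr{A}$ into the dg-category of $A_\infty$-modules is cohomologically full and faithful, hence a quasi-equivalence of $\mathscr{A}$ onto the full $A_\infty$-subcategory $\mathscr{D}\subseteq\mathrm{mod}\text{-}\mathscr{A}$ spanned (up to quasi-isomorphism) by its image; because the higher products of $\mathrm{mod}\text{-}\mathscr{A}$ vanish, $\mathscr{D}$ is an honest dg-category. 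As quasi-equivalences induce equivalences on derived module categories and the pretriangulated condition is phrased in terms of the image of $H^0$ inside such a derived category (closure under shifts and mapping cones), $\mathscr{A}$ pretriangulated forces $\mathscr{D}$ pretriangulated.

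Next I would apply Theorem \ref{M} to the quasi-equivalence $\mathscr{A}\xrightarrow{\sim}\mathscr{D}$, obtaining a weak equivalence $\mathrm{N}_{\tiny{\mbox{A}}_\infty}(\mathscr{A})\to\mathrm{N}_{\tiny{\mbox{A}}_\infty}(\mathscr{D})$ in the Joyal model structure. Since $\mathscr{D}$ is a dg-category (all $m_n$ with $n\geq 3$ vanish), its $A_\infty$-nerve coincides with its dg-nerve $\mathrm{N}_{\mathrm{dg}}(\mathscr{D})$. By Faonte's theorem the dg-nerve of the pretriangulated dg-category $\mathscr{D}$ is an $\infty$-stable category, and its homotopy category is $H^0(\mathscr{D})$ with its canonical triangulation. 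Because $\infty$-stability and idempotent completeness of $\infty$-categories are invariant under categorical equivalence, it follows that $\mathrm{N}_{\tiny{\mbox{A}}_\infty}(\mathscr{A})$ is an $\infty$-stable category, and the weak equivalence yields $\mathrm{Ho}(\mathrm{N}_{\tiny{\mbox{A}}_\infty}(\mathscr{A}))\simeq\mathrm{Ho}(\mathrm{N}_{\mathrm{dg}}(\mathscr{D}))\cong H^0(\mathscr{D})$.

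For the homotopy-category statement I would then compose this with the exact equivalence $H^0(\mathscr{A})\xrightarrow{\sim}H^0(\mathscr{D})$ coming from Yoneda, and verify that the resulting equivalence is the canonical comparison functor $H^0(\mathscr{A})\to\mathrm{Ho}(\mathrm{N}_{\tiny{\mbox{A}}_\infty}(\mathscr{A}))$ (which on objects and morphisms is literally the identity, so that it is automatically an equivalence of plain categories); the content is that it is exact. To see this one checks that the shift functor of the pretriangulated $\mathscr{A}$ is carried to the suspension of the stable $\infty$-category and that a distinguished triangle in $H^0(\mathscr{A})$ arising from an $A_\infty$-cone is carried to a cofiber sequence in $\mathrm{N}_{\tiny{\mbox{A}}_\infty}(\mathscr{A})$; both are cleanest to verify after strictifying, using that Faonte's identification is exact and that the dg-nerve sends a strict mapping cone to a cofiber sequence. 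Finally, idempotent completeness follows from the fact that a stable $\infty$-category is idempotent complete exactly when its homotopy category is, combined with the analogous fact that a pretriangulated $A_\infty$-category is idempotent complete exactly when $H^0(\mathscr{A})$ is; since $H^0(\mathscr{A})\cong\mathrm{Ho}(\mathrm{N}_{\tiny{\mbox{A}}_\infty}(\mathscr{A}))$, the two conditions coincide.

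The main obstacle I anticipate is not any single step but keeping the triangulated structures aligned through the whole zig-zag $H^0(\mathscr{A})\to H^0(\mathscr{D})\cong\mathrm{Ho}(\mathrm{N}_{\mathrm{dg}}(\mathscr{D}))\simeq\mathrm{Ho}(\mathrm{N}_{\tiny{\mbox{A}}_\infty}(\mathscr{A}))$: one must ensure that the triangulation induced from the stable $\infty$-category on $\mathrm{Ho}(\mathrm{N}_{\tiny{\mbox{A}}_\infty}(\mathscr{A}))$ is the one transported from the pretriangulated structure on $\mathscr{A}$, and that the comparison functor is genuinely exact rather than merely additive — this is where the bookkeeping of cones, shifts and their compatibility with the $A_\infty$-nerve construction has to be done with care.
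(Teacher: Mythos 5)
Your proposal is correct and follows essentially the same route as the paper: strictify via the $\mbox{A}_{\infty}$-Yoneda embedding onto the dg-category of representable modules $\underline{\mbox{Rep}}(\mathscr{A})$ (which is pretriangulated when $\mathscr{A}$ is), transport along the weak equivalence of nerves from Theorem \ref{M} (Corollary \ref{Coro}), apply Faonte's theorem for the dg-nerve, and settle idempotent completeness via Lurie's Lemma 1.2.4.6. Your extra care about the exactness of the comparison functor on homotopy categories is a welcome elaboration of a point the paper treats briefly, but it is not a different argument.
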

This means that the $\mbox{A}_{\infty}$-nerve of a (strictly unital) pretriangulated $\mbox{A}_{\infty}$-category is a stable $\infty$-category, and the nerve induces a triangulated functor at homotopy categories level.
Unfortunately, using the $\mbox{A}_{\infty}$-nerve, we do not have an equivalence of $\infty$-categories between the nerve of the category of the $\mbox{A}_{\infty}$-categories and a stable $\infty$-category as in the case of the category of dg-categories (localized over Morita equivalences).\
On the other hand, if $\mathbb{K}$ is a field, we recently prove (see \cite{CO}) that the category of A$_{\infty}$-categories linear over $\mathbb{K}$, is a fibrant category.\ As in the case of dg-categories, the A$_{\infty}$-nerve preserves the fibrations (see Theorem \ref{ganzo}).

\subsection*{Acknowledgements} 
The author wants to thank Paolo Stellari for proposing the topic and Gon\c calo Tabuada for the valuable advice.
The author is also very grateful to Francesco Genovese, Marco Manetti and Zhao Yan for many useful and interesting discussions and to Emily Riehl and Bruno Vallette for the thorough explanations on their results. I am grateful to the anonymous referee for a very careful reading of the manuscript.

\tableofcontents

\section{$\mbox{A}_{\infty}$-modules, quasi-equivalences and pretriangulated $\mbox{A}_{\infty}$-categories}

In this section we will recall some basic definitions about the $\mbox{A}_{\infty}$-categories and we will discuss the pretriangulated envelopement of the $\mbox{A}_{\infty}$-categories.

\subsection{Brief background on $\mbox{A}_{\infty}$-categories} 
First of all we give some information about $\mbox{A}_{\infty}$-categories well known to the experts, a good reference for the theory of the $\mbox{A}_{\infty}$-categories is \cite{Sei}. We omit the notion of dg-category that will further be useful, cf. \cite{Kel} for a survey about this topic.\\
\\
Let $\mathbb{K}$ be a commutative ring.

\begin{defn}[$\mbox{A}_\infty$-category]
We define an \emph{${\mbox{A}}_\infty$-category} $\mathscr{A}$ to be a set of objects, a $\mathbb{K}$-linear graded module $\mbox{Hom}_{\mathscr{A}}(x_0,x_1)$ for any pair of objects, and $\mathbb{K}$-linear maps 
$$m^d_{\mathscr{A}}:\mbox{Hom}_{\mathscr{A}}(x_{d-1},x_d)\otimes...\otimes\mbox{Hom}_{\mathscr{A}}(x_{0},x_1)\to\mbox{Hom}_{\mathscr{A}}(x_{0},x_d)[2-d]\footnote{where $\mbox{[$n$]}$ denotes the shift of a graded $\mathbb{K}$-module down by an integer $n$.},$$
for every $d>0$. Moreover the maps above must verifying the followings:
\begin{align}
\sum_{m,n} (-1)^{\dagger_n}m^{d-m+1}_{\mathscr{A}}(a_d,...,a_{n+m+1},m^m_{\mathscr{A}}(a_{n+m},...,a_{n+1}),a_n,...,a_1) &=0.
\end{align}
where $1\le m\le d$, $0\le n\le d-m$ and $\dagger_n=\mbox{deg$(a_1)$}+...+\mbox{deg$(a_n)$}-n$.
\end{defn}

\begin{exmp}
Every differential graded category $\mathscr{C}$ is an A$_{\infty}$-category such that $m_{\mathscr{C}}^{n\ge3}=0$.
\end{exmp}

\begin{defn}[Unit]\label{unil}
Given an object $x$ in $\mathscr{A}$. We define the \emph{unit} of $x$ to be a morphism of degree zero, denoted by $1_x$, such that: 
\begin{itemize}
\item[(u1)] $m^2_{\mathscr{A}}(f,1_x)=f$ and $m^2_{\mathscr{A}}(1_x,f)=(-1)^{\tiny\mbox{deg $f$}}f$, for every morphism $f$;
\item[(u2)] $m^n_{\mathscr{A}}(...,1_x,...)=0$, for all $n>2$.
\end{itemize}
\end{defn}

\begin{defn}[Strictly unital $\mbox{A}_{\infty}$-functor]
We define an \emph{$\mbox{A}_{\infty}$-functor} $\mathscr{F}:\mathscr{A}\to\mathscr{A}'$ to be a map $\mathscr{F}_0$ between the objects of $\mathscr{A}$ and $\mathscr{A}'$ and a collection of $\mathbb{K}$-linear maps (for all $n\ge1$): $$\mathscr{F}_n:\mbox{Hom}_\mathscr{C}(x_{n-1},x_{n})\otimes...\otimes \mbox{Hom}_\mathscr{C}(x_{0},x_{1})\to \mbox{Hom}_{\mathscr{D}}(\mathscr{F}_0(x_0),\mathscr{F}_0(x_n))[1-n]$$ such that for every $0<m\le n$:
\begin{align*}
\displaystyle\sum_{r\ge1} &\displaystyle\sum_{s_1+...+s_r=n}m^{r}_{\B}\big(\F^{s_r}(f_n,...,f_{n-s_r+1}),...,\F^{s_1}(f_{s_1},...,f_1)\big)=\\
&=\displaystyle\sum_{m=1}^n\displaystyle\sum_{k=0}^{n-m}(-1)^{\dagger_k} \F^{n-m+1}\big(f_n,...,f_{k+m+1},m^{m}_{\A}(f_{k+m},...,f_{k+1}),f_k,...,f_1\big).
\end{align*}
Where $\dagger_k=\mbox{deg}(f_d)+...+\mbox{deg}(f_1)-d$.\\
Moreover we require that the unit have to be preserved by $\mathscr{F}^1$ and $\mathscr{F}^n(...,1_x,...)=0$ for all $n\ge2$.
\end{defn}

From now on we consider only \emph{strictly unital $\mbox{A}_{\infty}$-categories} i.e. $\mbox{A}_{\infty}$-categories with units, according to the definition \ref{unil}, and strictly unital A$_{\infty}$-functors.\ The reason why we work with strictly unital A$_{\infty}$-categories will become clear in section \ref{nervez} when we define the A$_{\infty}$-nerve.

\subsection{Quasi-equivalences between $\mbox{A}_{\infty}$-categories}
In this subsection we define the \emph{homotopy category} of an A$_{\infty}$-category and the notion of \emph{quasi-equivalence}.

\begin{defn}[Homotopy category]
Let $\mathscr{A}$ be an $\mbox{A}_{\infty}$-category, we define the \emph{homotopy category} $\mbox{Ho}(\mathscr{A})$ of $\mathscr{A}$, as the category whose objects are the objects of $\mathscr{A}$ and whose morphisms, for $x$ and $y\in\mbox{Obj}(\mathscr{A})$, are given by the quotients$$\mbox{Hom}_{\tiny\mbox{Ho}(\mathscr{A})}(x,y):=\frac{Z^0(\mbox{Hom}_{\mathscr{A}}(x,y))}{B^0(\mbox{Hom}_{\mathscr{A}}(x,y))}=H^0(\mbox{Hom}_{\mathscr{A}}(x,y)),$$
where $Z^0(\mbox{Hom}_{\mathscr{A}}(x,y)):=\mbox{Ker}(m^1_{\mathscr{A}}:\mbox{Hom}^0_{\tiny\mathscr{A}}(x,y)\to\mbox{Hom}^{1}_{\tiny\mathscr{A}}(x,y))$ and\\ $B^0(\mbox{Hom}_{\mathscr{A}}(x,y)):=\mbox{Im}(m^1_{\mathscr{A}}:\mbox{Hom}^{-1}_{\tiny\mathscr{A}}(x,y)\to\mbox{Hom}^{0}_{\tiny\mathscr{A}}(x,y)).$
\end{defn}

\begin{defn}[Quasi-equivalence]
Let $\mathscr{A}$, $\mathscr{A}'$ be $\mbox{A}_{\infty}$-categories, we say that an $\mbox{A}_{\infty}$-functor $\mathcal{f}\mathscr{F}^n\mathcal{g}:\mathscr{A}\to\mathscr{A}'$ is a \emph{quasi-equivalence} if:\begin{itemize}
\item[(we1)] $\mbox{Ho$(\mathscr{F})$}:\mbox{Ho}(\mathscr{A})\to \mbox{Ho}(\mathscr{A}')$ is an equivalence of categories.
\item[(we2)] $\mathscr{F}^1:\mbox{Hom}^{\cdot}_{\mathscr{A}}(x,y)\to\mbox{Hom}^{\cdot}_{\mathscr{A}'}\big(\mathscr{F}^0(x),\mathscr{F}^0(y)\big)$ is a  quasi-isomorphism.
\end{itemize}
\end{defn}

\begin{exmp}
Two dg-categories which are quasi-equivalent are quasi-equivalent as $\mbox{A}_{\infty}$-categories.\ Note that two A$_{\infty}$ (resp. dg) -categories $\mathscr{A}$ and $\mathscr{B}$ are quasi-equivalent if there exists a zig-zag of quasi-equivalences whose source is $\mathscr{A}$ and target $\mathscr{B}$. 
\end{exmp}

We have this following fundamental result:

\begin{thm}[\cite{COS}, \cite{COS2}]\label{}
We have a functor $U:\aCat\to\DgCat$ providing a DK adjunction of categories
\begin{align*}
\xymatrix{
U:\aCat\ar@<-0.5ex>[r]&\ar@<-0.5ex>[l]\DgCat:i
}
\end{align*}
Where $\aCat$ denotes the category of strictly unital A$_{\infty}$-categories and $\DgCat$ denotes the category of dg-categories.\
In particular, given $\mathscr{A}\in\aCat$, we have a quasi-equivalence $\A\to\mbox{U}(\A)$.
\end{thm}

\subsection{Pretriangulated $\mbox{A}_{\infty}$-categories} The next definition is due to Kontsevich, we refer to \cite{BLM} for the proofs.\
 Let $\mathscr{A}$ be a $\mathbb{K}$-linear $\mbox{A}_{\infty}$-category.

\begin{defn}[Shift category and shift functor]
We define the category $\Sigma(\mathscr{A})$ to be the $\mbox{A}_{\infty}$-category such that $\mbox{Obj}(\Sigma\mathscr{A})=(\mbox{Obj}(\mathscr{A}))\times\mathbb{Z}$, and morphisms are defined as follow $$\mbox{Hom}_{\Sigma(\mathscr{A})}(x[n],y[m]):=\mbox{Hom}_{\mathscr{A}}(x,y)[m-n],$$where $x$, $y\in\mathscr{A}$. The endofunctor sending $x[n]$ to $x[n+1]$ is called \emph{shift functor}.
\end{defn}

\begin{defn}[Closed under shift]
We say that $\mathscr{A}$ is \emph{closed under shift} if $\mathscr{A}\hookrightarrow\Sigma(\mathscr{A})$ is a quasi-equivalence.
\end{defn}

As in the case of dg-categories the set of $\mbox{A}_\infty$-twisted complexes has an $\mbox{A}_{\infty}$-structure (see \cite{Sei}, \cite{BLM}) whose homotopy category is triangulated.\ We denote such an $\mbox{A}_\infty$-category by $\mbox{pretr($\mathscr{A}$)}$. Moreover we have an $\mbox{A}_{\infty}$-functor 
\begin{align*}
i_{\tiny\mbox{A}_{\infty}}:\mathscr{A}\hookrightarrow \mbox{pretr}_{\tiny\mbox{A}_{\infty}}(\mathscr{A})
\end{align*} 
and it was proven that the construction is functorial (cf. $\mathcal{x}$3 of \cite{Sei}). Given an $\mbox{A}_{\infty}$-morphism $\mathscr{F}$ we denote by $\mbox{pretr}_{\tiny\mbox{A}_{\infty}}\mathscr{F}$ the induced functor.

\begin{defn}[Pretriangulated $\mbox{A}_{\infty}$-categories]
We say that an $\mbox{A}_{\infty}$-category $\mathscr{A}$ is \emph{pretriangulated} if $\mathscr{A}$ is closed under shift and the functor $i_{\tiny\mbox{A}_{\infty}}:\mathscr{A}\hookrightarrow \mbox{pretr}_{\tiny\mbox{A}_{\infty}}(\mathscr{A})$ is a quasi-equivalence.
\end{defn}

\begin{rem}
If $\mathscr{C}$ is a dg-category $\mbox{pretr}(\mathscr{C})=\mbox{pretr}_{\tiny\mbox{A}_{\infty}}(\mathscr{C})$, where $\mbox{pretr}(\mathscr{C})$ denotes the pretriangulated envelope of the dg-category $\mathscr{C}$ according to the notation of \cite{Kel}.
\end{rem}

We have the following \cite[Lemma 3.25]{Sei}:

\begin{thm}{}
Let $\mathscr{F}:\mathscr{A}\to\mathscr{B}$ be a quasi-equivalence between two $\mbox{A}_{\infty}$-categories then 
$$\mbox{pretr}_{\tiny\mbox{A}_{\infty}}\mathscr{F}:\mbox{pretr}_{\tiny\mbox{A}_{\infty}}(\mathscr{A})\to\mbox{pretr}_{\tiny\mbox{A}_{\infty}}(\mathscr{B})$$ 
is a quasi-equivalence.
\end{thm}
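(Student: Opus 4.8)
The plan is to reduce the statement to the functoriality of the twisted-complex construction and to the explicit description of the maps and morphism complexes of $\mbox{pretr}_{\tiny\mbox{A}_{\infty}}(\mathscr{A})$. First I would recall that an object of $\mbox{pretr}_{\tiny\mbox{A}_{\infty}}(\mathscr{A})$ is a twisted complex $((E_i[n_i]),(\alpha_{ij}))$ and that $\mbox{pretr}_{\tiny\mbox{A}_{\infty}}\mathscr{F}$ sends it to the twisted complex with underlying objects $(\mathscr{F}^0(E_i)[n_i])$ and connecting maps obtained by applying the collection $(\mathscr{F}^n)$ to strings of the $\alpha_{ij}$'s (this is exactly the functor referenced in the remark preceding the Definition of pretriangulated $\mbox{A}_{\infty}$-categories, i.e. $\S 3$ of \cite{Sei}). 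Since $\mathscr{F}$ is a unital $\mbox{A}_{\infty}$-functor, $\mbox{pretr}_{\tiny\mbox{A}_{\infty}}\mathscr{F}$ is again a unital $\mbox{A}_{\infty}$-functor, so it suffices to check conditions (we1) and (we2) of the definition of quasi-equivalence.

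For (we2) I would argue by induction on the number of "layers" (i.e. the cardinality of the index set) of the twisted complexes. The morphism complex $\mbox{Hom}_{\tiny\mbox{pretr}_{\tiny\mbox{A}_{\infty}}(\mathscr{A})}(T,T')$ carries a natural finite filtration by the sub-twisted-complexes $E_i$ of $T$ and $T'$; the associated graded pieces are (shifts of) the complexes $\mbox{Hom}_{\mathscr{A}}(E_i,E'_j)$ with differential a deformation of $m^1_{\mathscr{A}}$ by the $\alpha$'s. The linear term of $\mbox{pretr}_{\tiny\mbox{A}_{\infty}}\mathscr{F}$ respects these filtrations and on the associated graded pieces it reduces to $\mathscr{F}^1:\mbox{Hom}_{\mathscr{A}}(E_i,E'_j)\to\mbox{Hom}_{\mathscr{B}}(\mathscr{F}^0E_i,\mathscr{F}^0E'_j)$, which is a quasi-isomorphism because $\mathscr{F}$ is a quasi-equivalence. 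By the standard comparison argument for filtered complexes (the five-lemma applied inductively to the long exact sequences of the filtration, or a spectral-sequence comparison), $\mbox{pretr}_{\tiny\mbox{A}_{\infty}}\mathscr{F}^1$ is a quasi-isomorphism on all Hom-complexes.

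For (we1) I would first use (we2), which we have just established, to see that $\mbox{Ho}(\mbox{pretr}_{\tiny\mbox{A}_{\infty}}\mathscr{F})$ is fully faithful (a quasi-isomorphism on Hom-complexes induces a bijection on $H^0$). It then remains to show essential surjectivity of $\mbox{Ho}(\mbox{pretr}_{\tiny\mbox{A}_{\infty}}\mathscr{F})$ up to isomorphism in $\mbox{Ho}(\mbox{pretr}_{\tiny\mbox{A}_{\infty}}(\mathscr{B}))$. Here I would use that $\mbox{Ho}(\mbox{pretr}_{\tiny\mbox{A}_{\infty}}(\mathscr{B}))$ is a triangulated category (the fundamental proposition above) generated, as a triangulated category, by the image of $\mathscr{B}$ under $i_{\tiny\mbox{A}_{\infty}}$: every twisted complex is built from its objects $E'_i$ by finitely many shifts and mapping cones. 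Since $\mbox{Ho}(\mathscr{F}):\mbox{Ho}(\mathscr{A})\to\mbox{Ho}(\mathscr{B})$ is essentially surjective, every $E'_i$ is isomorphic to some $\mathscr{F}^0(E_i)$, and because $\mbox{Ho}(\mbox{pretr}_{\tiny\mbox{A}_{\infty}}\mathscr{F})$ is exact and full, the cones and shifts used to assemble $T'$ can be realized in the image; a short induction on the number of layers then produces a twisted complex $T$ in $\mathscr{A}$ with $\mbox{pretr}_{\tiny\mbox{A}_{\infty}}\mathscr{F}(T)\cong T'$ in $\mbox{Ho}(\mbox{pretr}_{\tiny\mbox{A}_{\infty}}(\mathscr{B}))$.

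The main obstacle I expect is bookkeeping rather than conceptual: making the filtration on $\mbox{Hom}_{\tiny\mbox{pretr}_{\tiny\mbox{A}_{\infty}}(\mathscr{A})}(T,T')$ precise, checking that $\mbox{pretr}_{\tiny\mbox{A}_{\infty}}\mathscr{F}$ is strictly compatible with it, and identifying the induced maps on associated graded with $\mathscr{F}^1$ while keeping track of the Koszul signs in the definition of $m^1$ on twisted complexes. The essential-surjectivity step is comparatively soft once one invokes that the homotopy categories are triangulated and that $\mbox{pretr}_{\tiny\mbox{A}_{\infty}}(\mathscr{B})$ is generated under cones and shifts by $\mathscr{B}$. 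Since the statement is quoted from \cite{Sei, Lemma 3.25.}, in the write-up I would either give this filtration argument in outline or simply cite Seidel, according to how self-contained the section is meant to be.
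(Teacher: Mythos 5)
Your proposal is correct, and it is essentially the same argument that stands behind the statement as it appears in the paper: the paper offers no proof of its own, simply quoting Seidel's Lemma 3.25, and Seidel's proof is precisely the combination you outline --- the length filtration on the morphism complexes of twisted complexes, whose associated graded identifies the first-order term of $\mbox{pretr}_{\tiny\mbox{A}_{\infty}}\mathscr{F}$ with (shifts of) $\mathscr{F}^1$, followed by generation of $\mbox{Ho}(\mbox{pretr}_{\tiny\mbox{A}_{\infty}}(\mathscr{B}))$ under shifts and cones by the image of $\mathscr{B}$ to get essential surjectivity. Two small points of hygiene: on the associated graded pieces the differential is just (a shift of) $m^1_{\mathscr{A}}$, not a deformation of it --- the $\alpha$-terms strictly shift the filtration degree, which is exactly why the comparison reduces to $\mathscr{F}^1$ and why the one-sidedness condition $i<j$ in the definition of twisted complexes is needed to make the filtration finite and exhaustive; and in the essential-surjectivity step you should record that $\mbox{Ho}(\mbox{pretr}_{\tiny\mbox{A}_{\infty}}\mathscr{F})$ is an exact functor of triangulated categories (standard, since $\mbox{pretr}_{\tiny\mbox{A}_{\infty}}\mathscr{F}$ preserves shifts and cones of twisted complexes up to the evident identifications), so that the essential image, being closed under shifts and cones and containing the $E'_i$ up to isomorphism, is all of $\mbox{Ho}(\mbox{pretr}_{\tiny\mbox{A}_{\infty}}(\mathscr{B}))$. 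With those details made explicit, either writing out the filtration argument or citing Seidel is perfectly adequate for the role the statement plays in this paper.
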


By the following diagram we deduce that, an $\mbox{A}_{\infty}$-category $\mathscr{A}$ is $\mbox{A}_{\infty}$-pretriangulated, if and only if $
{\mbox{U}}(\mathscr{A})$ is pretriangulated (as dg-category).
\[
\xymatrixrowsep{0.45in}
\xymatrix{
\mathscr{A}\ar[rr]^{\sim}\ar@{^{(}->}[d]&& {\mbox{U}}(\mathscr{A})\ar@{^{(}->}[d]\\
\mbox{pretr}_{\tiny\mbox{A}_{\infty}}(\mathscr{A})\ar[rr]^{\sim}&& \mbox{pretr}({\mbox{U}}(\mathscr{A}))
}
\] 
\begin{defn}[Idempotent complete]
We say that an additive category $\mathscr{K}$ is \emph{idempotent complete} if any endomorphism $E:k\to k$ such that $E^2=E$ (idempotent) is such that $k=\mbox{Im}(E)\oplus\mbox{ker}(E)$.
\end{defn}

According to \cite[Definition 1.2]{BaSc}, in general, we can always embed an additive category in a idempotent complete category (we denote by $(-)^{ic}$ such an embedding) moreover if $\mathscr{K}$ is a triangulated category we have the following \cite[Theorem 1.5]{BaSc}:

\begin{prp}
If $\mathscr{K}$ is a triangulated category, its idempotent completion $(\mathscr{K})^{ic}$ admits a unique triangulated structure such that the canonical functor $(-)^{ic}$ is exact.
\end{prp}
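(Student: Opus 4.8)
The plan is to equip $(\mathscr{K})^{ic}$ with a triangulated structure explicitly and then show that structure is forced. Recall that, writing $(-)^{ic}\colon\mathscr{K}\to(\mathscr{K})^{ic}$ for the Karoubi envelope, the objects of $(\mathscr{K})^{ic}$ are pairs $(M,p)$ with $p^2=p$ in $\mathrm{End}_{\mathscr{K}}(M)$, a morphism $(M,p)\to(N,q)$ is a $g\colon M\to N$ with $qgp=g$, $(-)^{ic}$ sends $M\mapsto(M,\mathrm{id}_M)$, and this category is additive, idempotent complete, with $(-)^{ic}$ fully faithful. Since idempotent completion is functorial, the shift $\Sigma$ extends canonically to an autoequivalence $\Sigma$ of $(\mathscr{K})^{ic}$ commuting with $(-)^{ic}$. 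I would then \emph{define} a candidate triangle of $(\mathscr{K})^{ic}$ to be \emph{distinguished} precisely when it is isomorphic to a direct summand of $(-)^{ic}(\bar\Delta)$ for some distinguished triangle $\bar\Delta$ of $\mathscr{K}$ (equivalently: when $\Delta\oplus\Delta'\cong(-)^{ic}(\bar\Delta)$ for some candidate triangle $\Delta'$ in $(\mathscr{K})^{ic}$).

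The verification of the axioms splits into a routine part and a technical part. Isomorphism invariance, the trivial triangles, rotation (TR2) and the morphism-lifting axiom (TR3) all reduce immediately to the corresponding facts in $\mathscr{K}$ after adding on a complementary candidate triangle, because the class just defined is closed under direct summands and sums. The technical content is concentrated in the following \emph{idempotent-lifting lemma}: if $X\xrightarrow{u}Y\xrightarrow{v}Z\xrightarrow{w}\Sigma X$ is a distinguished triangle of $\mathscr{K}$ and $e_X,e_Y$ are idempotents with $e_Yu=ue_X$, then there is an \emph{idempotent} $e_Z$ on $Z$ with $ve_Y=e_Zv$ and $we_Z=(\Sigma e_X)w$. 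To prove it I would first fill in $(e_X,e_Y)$ to a morphism of triangles $(e_X,e_Y,t)$ by TR3; then $\mu:=t^2-t$ satisfies $\mu v=0$ and $w\mu=0$, commutes with $t$, and writing $\mu=\rho w$ (possible since $\mu v=0$) gives $w\rho w=w\mu=0$, hence $\mu^2=0$; one checks directly that $e_Z:=3t^2-2t^3=t+\mu-2t\mu$ is idempotent and, since $tv=ve_Y$ forces $t^kv=ve_Y$ for all $k$ (and symmetrically $wt^k=(\Sigma e_X)w$), that $e_Z$ remains compatible with $v$ and $w$.

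With the lemma in hand, TR1 (existence of cones) goes as follows. Given $a\colon A\to B$ in $(\mathscr{K})^{ic}$, write $A=(M,p)$, $B=(N,q)$, lift $a$ to $\bar a\colon M\to N$ with $q\bar ap=\bar a$, and complete the $\mathscr{K}$-morphism $\psi:=(\bar a,\,1-q)\colon M\oplus N\to N$ (which, as a morphism of $(\mathscr{K})^{ic}$, is isomorphic to $a$ direct-sum with zero and identity morphisms of complementary summands) to a distinguished triangle $M\oplus N\xrightarrow{\psi}N\to\bar C\to\Sigma(M\oplus N)$ of $\mathscr{K}$. The projection of $M\oplus N$ onto its $A$-summand and of $N$ onto its $B$-summand are idempotents compatible with $\psi$, so the lemma yields an idempotent on $\bar C$ compatible with the triangle maps; splitting it in $(\mathscr{K})^{ic}$ carves out, from $(-)^{ic}$ of this distinguished triangle, a direct summand $A\xrightarrow{a}B\to C\to\Sigma A$, which is distinguished by construction. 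The octahedral axiom (TR4) is handled by the same philosophy — realize a given composable pair of $(\mathscr{K})^{ic}$ as a direct summand of a composable pair in $\mathscr{K}$ by the same device, form the octahedron there, and carry the three relevant idempotents through it using the lemma repeatedly before splitting — and I expect this bookkeeping, rather than any conceptual point, to be the main obstacle in the whole argument.

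Finally, uniqueness. Let $\mathcal{S}$ be any triangulated structure on $(\mathscr{K})^{ic}$ making $(-)^{ic}$ exact. On the one hand, images of distinguished triangles of $\mathscr{K}$ are $\mathcal{S}$-distinguished, and a direct summand of an $\mathcal{S}$-distinguished triangle is again $\mathcal{S}$-distinguished (a standard fact; here one may invoke idempotent-completeness of $(\mathscr{K})^{ic}$ together with the lemma above and a conjugacy-of-idempotents argument), so $\mathcal{S}$ contains the class defined above. On the other hand, if $\Delta\colon A\xrightarrow{a}B\to C\to\Sigma A$ is $\mathcal{S}$-distinguished, choose $\bar A=A\oplus A'$ and $\bar B=B\oplus B'$ in $\mathscr{K}$, complete $\bar a:=a\oplus 0\colon\bar A\to\bar B$ to a distinguished triangle $\bar\Delta$ of $\mathscr{K}$, and compare $(-)^{ic}(\bar\Delta)$ with $\Delta$ summed with the (always distinguished) triangle on $A'\xrightarrow{0}B'$: both are $\mathcal{S}$-distinguished with first map $\bar a$, so uniqueness of cones in $\mathcal{S}$ gives $\Delta\oplus(\text{triangle on }A'\xrightarrow{0}B')\cong(-)^{ic}(\bar\Delta)$, exhibiting $\Delta$ as a summand of $(-)^{ic}(\bar\Delta)$. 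Hence $\mathcal{S}$ coincides with the class constructed above, so the triangulated refinement of $(\mathscr{K})^{ic}$ is unique; that this $(-)^{ic}$ is exact is immediate, since by construction it sends distinguished triangles to distinguished triangles and commutes with $\Sigma$.
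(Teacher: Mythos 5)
The paper itself does not prove this proposition: it is quoted from Balmer--Schlichting [BaSc, Thm.\ 1.5], so the comparison has to be with their proof. Your overall design --- declare a triangle of $(\mathscr{K})^{ic}$ distinguished when it is a direct factor of the image of a distinguished triangle of $\mathscr{K}$, and get uniqueness by adding a complementary triangle and invoking uniqueness of cones --- is exactly their definition and their uniqueness argument, and your idempotent-lifting lemma is correct as stated: with $t$ a TR3 filler one has $\mu=t^{2}-t$ with $\mu v=0$, $w\mu=0$, $\mu^{2}=0$, and $e_{Z}=3t^{2}-2t^{3}$ is an idempotent still compatible with $v$ and $w$. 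Your TR1 construction via $\psi=(\bar a,\,1-q)$ and your TR2, TR3 reductions are also fine.

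The genuine gap is TR4, and it is not bookkeeping. Your lemma treats one cone at a time, starting from a TR3 filler which is neither unique nor functorial; for the octahedron over a composable pair $\bar A\to\bar B\to\bar C$ in $\mathscr{K}$ you need idempotents on the three cones that are \emph{simultaneously} compatible with the maps of the third triangle $\mathrm{cone}(\bar A\to\bar B)\to\mathrm{cone}(\bar A\to\bar C)\to\mathrm{cone}(\bar B\to\bar C)\to\Sigma\,\mathrm{cone}(\bar A\to\bar B)$, and applying the lemma three times independently gives no control over that compatibility. This is precisely where Balmer--Schlichting proceed differently: they embed $\mathscr{K}$ fully faithfully and exactly into a ``countable envelope'', a triangulated category with countable coproducts in which idempotents split by the B\"okstedt--Neeman telescope argument; the closure of $\mathscr{K}$ under direct summands there is equivalent to $(\mathscr{K})^{ic}$, its distinguished triangles (ambient triangles with vertices in the subcategory) are shown to be exactly your class, and every axiom --- in particular the octahedron --- is inherited from the ambient category with no direct verification. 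So either you prove the simultaneous-compatibility statement needed for TR4 (which does not follow from your lemma as it stands) or you import such an embedding; as written, TR4 is unproven. A smaller point of the same nature: the ``standard fact'' that a direct summand of an $\mathcal{S}$-distinguished triangle is again $\mathcal{S}$-distinguished, used for one inclusion of your uniqueness argument, is true but is itself a lemma requiring proof, not a formality.
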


\begin{defn}[Idempotent complete]
We say that a pretriangulated $\mbox{A}_{\infty}$-category $\mathscr{T}$ (resp. dg-category) is \emph{idempotent complete} if the homotopy category $\mbox{Ho}(\mathscr{T})$ is idempotent complete.
\end{defn}

\section{$\mbox{A}_{\infty}$-nerve}

\subsection{Brief background on $\infty$-categories}

We briefly recall the basics about $\infty$-categories.\ 
The non-expert reader can have a look at Chapter 1 and 2 of \cite{Lur1}.

\begin{defn}[Minimal $\mathbb{K}$-linear category]
Let $n$ be a positive integer (or zero). We define the \emph{minimal $\mathbb{K}$-linear category} $\mbox{[$n$]}_{\mathbb{K}}$ to be the category whose objects are the positive integers $\mathcal{f}0,1,2,...,n\mathcal{g}$ and the morphisms are defined by 
$$\mbox{Hom}_{\tiny\mbox{[$n$]}_{\mathbb{K}}}(i,k)=
\begin{cases} 0_{\mathbb{K}}, & \mbox{if $i>k$} \\ \langle j_{ik}\rangle_{\mathbb{K}}, & \mbox{if $i<k$} \\ \langle1_{\tiny\mathbb{K}}\rangle_{\mathbb{K}}, & \mbox{if $i=k$}.
\end{cases}$$
where $0_{\mathbb{K}}$ is the zero module and $\langle j_{ik}\rangle_{\mathbb{K}}$ is the $\mathbb{K}$-module generated by the element $j_{ik}$.
The composition is defined as follow; let $i_1<i_2<i_3$ be positive integers. Then:
\begin{equation}\label{compos}
\cdot:=\mbox{Hom}_{\tiny\mbox{[$n$]}_{\mathbb{K}}}(i_2,i_3)\otimes_{\mathbb{K}}\mbox{Hom}_{\tiny\mbox{[$n$]}_{\mathbb{K}}}(i_1,i_2)\to\mbox{Hom}_{\tiny\mbox{[$n$]}_{\mathbb{K}}}(i_1,i_3)
\end{equation}
is such that
\begin{equation}\label{composiz}
j_{i_2i_3}\cdot j_{i_1i_2}= j_{i_1i_3},
\end{equation}
where $j_{i_1i_3}$ is the unique morphism in $\mbox{Hom}_{\tiny\mbox{[$n$]}_{\mathbb{K}}}(i_1,i_3)$.
\end{defn}

\begin{rem}
The definition above works even without the $\mathbb{K}$-linear enrichment.\ In this case, in equation (\ref{compos}) we take "$\times$", the usual categorical product of sets, instead of the tensor product. 
\end{rem}

\begin{defn}[Simplex category]
We define the \emph{simplex category} to be the category whose objects are the minimal $\mathbb{K}$-linear categories $\mbox{[$n$]}$ and whose morphisms are the functions $f$ such that $f(i)\le i$ and $f(i_1)\le f(i_2)$ if $i_1\le i_2$. We denote by $\Delta$ such a category.
\end{defn}

\begin{defn}[Simplicial set]
We define a \emph{simplicial set} to be a contravariant functor from the simplex category $\Delta$ to the category of sets. 
\end{defn}

We will denote by $\mbox{sSet}$ the category of simplicial sets.

\begin{exmp}
Given a positive integer $n$, the functor $\Delta^{n}$ defined as $\mbox{Hom}_{\Delta}(-,\mbox{[$n$]}):\Delta^{\tiny\mbox{op}}\to\mbox{Sets}$ is a simplicial set. Moreover for each $0\le i\le n$ the functor generated by all the maps $d^{j}:\mbox{[$n-1$]}\to\mbox{[$n$]}$ (which are the injective maps not having $j$ in the image), with $i\not=j$, is a subsimplicial set of $\Delta^n$. We call such a simplicial set \emph{$(n,i)$-horn} and we denote it by $\Lambda^n_i$.
\end{exmp}

\begin{defn}[$\infty$-category]
We define an \emph{$\infty$-category} to be a simplicial set $X$ such that, for every positive integer $n$ and every natural transformation $\phi:\Lambda^n_k\to X$, with $0<k<n$, there exists (at least) one map $\tilde{\phi}$ such that the following diagram:
\[
\xymatrix{
\Lambda^n_k\ar@{_(->}[d]\ar[r]^{\phi}&X\\
\Delta^n\ar@{-->}[ur]_{\tilde{\phi}}&
}
\]
commutes. 
\end{defn}

Let $X$ be an $\infty$-category, the \emph{objects} of $X$ are given by the elements of the set $X_0$. The \emph{(simplicial) set of morphisms} from $x$ to $y$, denoted by $\mbox{Map}_X(x,y)$, is given by the pullback of the following diagram:
\[
\xymatrix{
\mbox{Map}_X(x,y)\ar[d]\ar[r]&X_1\ar[d]^{(d,c)}\\
\ast\ar[r]_-{(x,y)}&X_0\times X_0
}
\]
where $d=X(d_1)\colon X_1\to X_0$, and $c=X(d_0)\colon X_1\to X_0$, see \cite[pp. 5]{riehl}

\begin{exmp}
Let $X$ be an $\infty$-category. Fixing two elements $x$ and $y\in X_0$, we get a simplicial set, denoted by $\mbox{Hom}^{\tiny\mbox{R}}_X(x,y)$, whose 0-simplices are 1-simplices in $X$ from $x$ to $y$, whose 1-simplices are
2-simplices of the form:
\[
\xymatrix{
&x\ar[dr]&\\
x\ar^{\tiny\mbox{1}_x}[ur]\ar[rr]&&y
}
\]
and whose $n$-simplices are $(n + 1)$-simplices whose target is $y$ and whose $(n + 1)^{\tiny\mbox{th}}$-face degenerates at $x$\footnote{cf. \textbf{Definition 3.5}}.
\end{exmp}

\begin{exmp}
Let $\mathscr{C}$ be a category, the simplicial set defined as the set of the compositions of $n$-arrows of $\mathscr{C}$, for every $n>0$, and as the set of objects of $\mathscr{C}$, if $n=0$, is an $\infty$-category. We call such a simplicial set the \emph{nerve} of $\mathscr{C}$ and we denote it by $\mbox{N}_{\tiny\mbox{Cat}}(\mathscr{C})$.
\end{exmp}

Given an $\infty$-category $X$, taking $f,g\in\mbox{Map}_X(x,y)$ we say that $f$ is \emph{homotopic} to $g$ if there exists a natural transformation $\sigma:\Delta^2\to X$ of the form:
\[
\xymatrix{
&x\ar[dr]^f&\\
x\ar^{\tiny\mbox{1}_x}[ur]\ar[rr]_g&&y
}
\]
We recall that the homotopy relation is an equivalence relation.

\begin{defn}[Homotopy category]
Let $X$ be an $\infty$-category.\ 
We define the \emph{homotopy category} $\mbox{Ho}(X)$ to be the (ordinary) category whose objects are the elements of $X_0$ and whose morphisms, fixed two objects $x$ and $y$, are given by the quotient of $\mbox{Map}_X(x,y)$ by the homotopy relation defined above.
\end{defn}

In other words the set of morphisms $\mbox{Hom}_{\tiny\mbox{Ho}(X)}(x,y)$ is given by $\pi_0(\mbox{Map}_X(x,y))$.

\subsection{Stable $\infty$-categories}

In this subsection we give the precise definitions of stable $\infty$-categories and exact functors between them.

\begin{defn}[Zero object in $\infty$-category]
Let $X$ be an $\infty$-category, we define the \emph{zero object} 0 to be an object of $X$ that is both initial and final, i.e.
$$\mbox{Map}_{X}(c,0)\simeq\mbox{Map}_{X}(0,c)\simeq\ast$$
for all $c\in X_0$.
\end{defn}

\begin{rem}
The zero object is unique up to equivalence.
\end{rem}

\begin{defn}[Pointed $\infty$-category]
We define a \emph{pointed $\infty$-category} to be an $\infty$-category equipped with a zero object.
\end{defn}
 
\begin{defn}[Fiber (cofiber) sequence] 
Let $X$ be a pointed $\infty$-category, we consider the functor of simplicial sets $T:\Delta^1\times\Delta^1\to X$ of the form:
\[
\xymatrix{
x\ar[d]\ar[r]^{f}&y\ar[d]^{g}\\
0\ar[r]&z
}
\]
We call $T$ a triangle in $X$. If $T$ is a pullback square we call it \emph{fiber sequence} (fiber of $g$), if $T$ is a pushout square we call it \emph{cofiber sequence} (cofiber of $f$).
\end{defn} 

\begin{rem} 
It easy to check that a triangle $T$ is the datum of:
\begin{itemize}
\item Two morphisms $f$, $g\in X_1$.
\item Two 2-simplices in $X_2$ of the form:
\[
\xymatrix{
x\ar[d]\ar[dr]^h&&&x\ar[r]^{f}\ar[dr]_{h}&y\ar[d]^{g} \\
0\ar[r]&z&&&z
}
\]
\end{itemize} 
We will indicate the \emph{triangle} $T$ by 
\[
\xymatrix{
x\ar[r]^f&y\ar[r]^{g}&z.
}
\]
\end{rem}

\begin{defn}[Stable $\infty$-category]
We say that $X$ is a \emph{stable $\infty$-category} if 
\begin{itemize}
\item[(S1)] $X$ is an $\infty$-category equipped with zero object (pointed $\infty$-category).
\item[(S2)] Every morphism has fibers and cofibers.
\item[(S3)] Every triangle in $X$ is a fiber sequence if and only if it is a cofiber sequence.
\end{itemize} 
\end{defn} 
Given a stable $\infty$-category $X$, we have an auto-equivalence $\Sigma:X\to X$ called \emph{suspension} functor, with inverse $\Omega$ called \emph{loop} functor, obtained via the category of subfunctors of $\mbox{Fun}(\Delta^1\times\Delta^1,X)$ generated by the following pullbacks and pushouts in $\Delta^1\times\Delta^1\to X$:
\[
\xymatrix{
x\ar[d]\ar[r]&0\ar[d]&&x_{\Omega}\ar[r]\ar[d]&0\ar[d] \\
0'\ar[r]&x_{\Sigma}&&0'\ar[r]&x
}
\]
where 0 and 0$'$ are zero objects in $X$ (cf. \cite[Chapter 1]{Lur2} for a precise definition). 
If $n>0$ we will denote by $x[n]$ the $\Sigma$ functor applied $n$-times to $x\in X$, if $n<0$ we will denote by $x[n]$ the $\Omega$ functor applied $n$-times to $x$.\\ 
We have the following fundamental theorem:

\begin{thm}[]
If $X$ is a stable $\infty$-category then the homotopy category \emph{$\mbox{Ho}(X)$} is a triangulated category with $\Sigma$ the $suspension$ functor as shift functor and distinguished triangles given by the following $\Delta^2\times\Delta^1\to X$ diagram:
\[
\xymatrix{
x\ar[d]\ar[r]&0\ar[d]\\
y\ar[d]\ar[r]&z\ar[d]\\
0'\ar[r]&w.
}
\]
\end{thm}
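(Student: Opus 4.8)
The plan is to follow the standard argument, as in Chapter~1 of \cite{Lur2}, in three stages: first promote $\mathrm{Ho}(X)$ to an additive category, then single out the candidate class of distinguished triangles, and finally check the triangulated-category axioms. For the first stage, the zero object $0\in X$ is still a zero object in $\mathrm{Ho}(X)$, since $\mathrm{Map}_X(0,c)\simeq\mathrm{Map}_X(c,0)\simeq *$ forces the corresponding hom-sets of $\mathrm{Ho}(X)$ to be one-point sets. Next I would show that in $X$ finite products and coproducts exist and agree: for $x,y\in X$ one builds, using (S2)--(S3), a square exhibiting an object $x\oplus y$ as simultaneously a product and a coproduct, so that $\mathrm{Ho}(X)$ has biproducts. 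The abelian group structure on $\mathrm{Hom}_{\mathrm{Ho}(X)}(x,y)$ comes from the fact that $\Omega$ is an autoequivalence, hence every object is equivalent to a double loop object $\Omega^2 w$; the first loop gives a group structure and the second forces commutativity by an Eckmann--Hilton argument, and one checks this addition agrees with the one coming from the biproduct. Bilinearity of composition follows because $\mathrm{Map}_X(c,-)$ and $\mathrm{Map}_X(-,c)$ carry biproducts to products. This makes $\mathrm{Ho}(X)$ additive.

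For the second stage, the shift functor is $[1]:=\Sigma$, which is an autoequivalence of $\mathrm{Ho}(X)$ because $\Sigma$ is an autoequivalence of $X$ with inverse $\Omega$. A sequence $x\xrightarrow{u}y\xrightarrow{v}z\xrightarrow{w}x[1]$ is declared \emph{distinguished} when it is isomorphic in $\mathrm{Ho}(X)$ to the boundary data of a diagram $\Delta^2\times\Delta^1\to X$ of the displayed shape in which every square is a pushout (equivalently, by (S3), a pullback) and the two objects forced to be zero really are zero objects. Starting from an arbitrary $u\colon x\to y$ one forms the cofiber square to get $z$ and $v$, then the cofiber of $v$; by the pasting lemma for pushouts and the presence of a zero object in a corner of the outer rectangle, this cofiber is canonically $x[1]$, and we obtain $w$. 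This already gives the existence part of (TR1).

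For the third stage I would verify (TR1)--(TR4). (TR1): identity triangles $x\xrightarrow{\mathrm{id}}x\to 0\to x[1]$ are distinguished via the constant square, closure under isomorphism is built into the definition, and existence was just handled. (TR2), rotation, is the technical core: one shows that if all squares of the defining diagram are pushouts then so are all squares of the diagram obtained by translating a column to the right and inserting a new zero object, whence $y\xrightarrow{v}z\xrightarrow{w}x[1]\xrightarrow{-u[1]}y[1]$ is again distinguished; the sign $-u[1]$ is produced by the group structure from the first stage, exactly as in the classical topological proof. (TR3): a map between the bases of two distinguished triangles extends to a map of triangles by functoriality of the cofiber construction, which follows from the universal property of pushouts in $X$ together with the inner-horn lifting property. (TR4), the octahedron: given composable $x\xrightarrow{u}y\xrightarrow{v}z$ I would assemble a larger diagram of pushout squares (indexed by a poset such as $\Delta^2\times\Delta^2$, with all small squares pushouts) containing the cofiber sequences of $u$, $v$ and $vu$ as rows and columns; reading off the induced picture in $\mathrm{Ho}(X)$ gives precisely the octahedral axiom.

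The genuinely delicate points, and the main obstacles, are: (a) that $\mathrm{Ho}(X)$ is additive --- specifically, reconciling the loop-space addition with the biproduct addition and checking that composition is bilinear; and (b) the rotation axiom (TR2), where the zero objects and the sign must be tracked with care and where the stability axiom (S3) (pushout if and only if pullback) is used essentially. Once these are in place, (TR3) and the octahedron (TR4) are largely bookkeeping built on repeated use of the pasting lemma for pushout squares.
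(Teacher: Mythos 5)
Your proposal is correct and is essentially the same argument the paper relies on: the paper states this theorem without proof, taking it from Chapter 1 of \cite{Lur2} (Higher Algebra, Theorem 1.1.2.14), and your three stages (additivity of $\mbox{Ho}(X)$ via biproducts together with the two-fold loop structure on mapping spaces, the definition of distinguished triangles by $\Delta^2\times\Delta^1$-diagrams of pushout squares with zero objects in the corners, and the verification of (TR1)--(TR4) with the sign in the rotation axiom coming from the suspension) reproduce exactly the steps of Lurie's proof. The delicate points you flag --- reconciling the loop-space addition with the biproduct addition, and the sign bookkeeping in (TR2) --- are precisely where the cited proof spends its effort, so as a blueprint your outline is accurate.
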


We denote by $\mbox{Cat}^{\tiny\mbox{St}}_\infty$ the category of \emph{stable $\infty$-categories} whose objects are the stable $\infty$-categories and whose morphisms are the functors of $\infty$-categories.\\

A functor between $\infty$-categories "a priori" does not give information about the zero object and the fiber sequences, so in the case of stable $\infty$-categories we prefer use the following definition of functors.

\begin{defn}[Exact functor]
Let $F:X\to X'$ be a functor between stable $\infty$-categories. We say that $F$ is \emph{exact} if the following are satisfied:
\begin{itemize}
\item[(E1)] $F(0_X)=0_{X'}$.
\item[(E2)] $F$ carries fiber sequences to fiber sequences.
\item[(E2$'$)] $F$ carries cofiber sequences to cofiber sequences.
\end{itemize}
\end{defn}
\begin{rem} 
If (E1) and (E2) holds true, than $F$ carries triangles to triangles. Moreover $F$ satisfies (E2) if and only if $F$ satisfies (E2$'$).
\end{rem}

\begin{exmp}
The identity functor of a stable $\infty$-category and the composition of two exact functors are exact functors. 
\end{exmp}

We denote by $\mbox{Cat}^{\tiny\mbox{Ex}}_\infty$ the \emph{exact stable $\infty$-category} whose objects are the stable $\infty$-categories and whose morphisms are the exact functors.

\subsection{$\mbox{A}_{\infty}$-nerve}\label{nervez}
The nerves are a useful tool to pass from a category to an $\infty$-category, in this section we will define the $\mbox{A}_{\infty}$-nerve, originally defined in \cite{Fao}, which is a generalization of the dg-nerve of Lurie. 

\begin{prp}
Let $n$ be a positive integer and $\mathscr{C}$ be an $\mbox{A}_{\infty}$-category. We denote by $[n]_{\mathbb{K}}$ the A$_{\infty}$-category such that $m^2_{[n]_{\mathbb{K}}}$ is given by (\ref{composiz}) and $m^{m\not=2}_{[n]_{\mathbb{K}}}=0$. 
Every maps {$\mathcal{f}\mathscr{F}^n\mathcal{g}\in\mbox{Hom}_{\tiny\mbox{A}_{\infty}\mbox{-Cat}}({[n]}_{\mathbb{K}},\mathscr{C})$} is uniquely determined by: 
\begin{itemize}
\item[1.] $n+1$-objects $\mathcal{f}X_{i}\mathcal{g}_{0\le i\le n}$ of $\mathscr{C}$, 
\item[2.] A set of morphisms $f_I$ for all set of integers $I=\mathcal{f}i_{0}<i_1<...<i_m<i_{m+1}\mathcal{g}$ where $0\le i_0<i_{m+1}\le n$ that satisfying the following:
\begin{align} \notag
 m_{\mathscr{C}}^1(f_{I})&=\sum_{1\le j\le m}(-1)^{j-1}f_{I-i_j}+\sum_{1\le j\le m}(-1)^{1+(m+1)(j-1)}m^2_{\mathscr{C}}(f_{i_j...i_{m+1}},f_{i_0...i_j})\\&+\sum_{r>2}\displaystyle\sum_{\ddagger_r}(-1)^{1+\epsilon_r}m^{r}_{\mathscr{C}}(f_{i_{m+1-s_r}...i_{m+1}},...,f_{i_0...i_{s_1}}). \label{eq:f}
\end{align} 
where
$$\ddagger_r=\mathcal{f}\mbox{$s_1$,..., $s_r\in\mathbb{N}$ $|$ $\displaystyle\sum_{j=1}^{r}{s_j=m+1}$}\mathcal{g}$$
$$\epsilon_r(i_1,...,i_r)=\sum_{2\le k\le r}(1-i_k+i_{k-1})i_{k-1}.$$
\end{itemize}
\end{prp}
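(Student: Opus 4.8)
The plan is to unwind the definition of an $\mbox{A}_{\infty}$-functor $\mathscr{F}:{[n]}_{\mathbb{K}}\to\mathscr{C}$ and translate the coherence equations \eqref{eq:fun} into the combinatorial identities \eqref{eq:f}, using heavily the fact that ${[n]}_{\mathbb{K}}$ is a very rigid category: every Hom-space is either $0$, one-dimensional, or the ground ring, and all generators sit in degree $0$ with $m^{\ge 2}$ reduced to the single composition law $j_{i_2i_3}\cdot j_{i_1i_2}=j_{i_1i_3}$ (and $m^1=0$, $m^{\ge 3}=0$ on ${[n]}_{\mathbb{K}}$). First I would record the bookkeeping: on an $(m{+}1)$-fold tensor of generators $j_{i_mi_{m+1}}\otimes\cdots\otimes j_{i_0i_1}$ indexed by a chain $I=\{i_0<i_1<\cdots<i_{m+1}\}$, the value $\mathscr{F}_{m+1}$ lands in $\mbox{Hom}_{\mathscr{C}}(X_{i_0},X_{i_{m+1}})[{-}m]$; set $f_I:=\mathscr{F}_{m+1}(j_{i_mi_{m+1}},\dots,j_{i_0i_1})$, and $f_{\{i_0,i_1\}}:=\mathscr{F}_1(j_{i_0i_1})$ gives the morphisms on generators, while the $n{+}1$ objects are $X_i:=\mathscr{F}_0(i)$. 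Conversely, since every element of a Hom-space of ${[n]}_{\mathbb{K}}$ is a $\mathbb{K}$-multiple of such a generator (or zero, when indices are out of order), multilinearity forces $\mathscr{F}$ to be completely determined by the data $(X_i, f_I)$; this gives the "uniquely determined by" half essentially for free, modulo checking that tensors containing a $0$-morphism or an identity are handled correctly — but identities are killed by $\mathscr{F}_{\ge 2}$ by the unitality axiom, which is exactly why the index sets $I$ in \eqref{eq:f} are taken with strict inequalities $i_0<\cdots<i_{m+1}$.

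The substance is then to show that the $\mbox{A}_{\infty}$-functor equation \eqref{eq:fun}, evaluated on the tuple $(j_{i_mi_{m+1}},\dots,j_{i_0i_1})$, is precisely \eqref{eq:f}. On the left side of \eqref{eq:fun} the inner $m_{\mathscr{A}}^k$ with $\mathscr{A}={[n]}_{\mathbb{K}}$ survives only for $k=1$ and $k=2$: the $k=1$ term is $m^1=0$, so it contributes nothing except through the outer $m^1_{\mathscr{C}}\circ\mathscr{F}_{m+1}=m^1_{\mathscr{C}}(f_I)$ (this is the term that becomes the left-hand side of \eqref{eq:f}), and the $k=2$ terms collapse the chain $I$ by composing two adjacent generators $j_{i_{j}i_{j+1}}\cdot j_{i_{j-1}i_{j}}=j_{i_{j-1}i_{j+1}}$, i.e. they delete the index $i_j$ and produce $\pm f_{I\setminus i_j}$; reconciling the sign $(-1)^{jk+l}=(-1)^{2j'+l}$ appearing in \eqref{eq:fun} with the $(-1)^{j-1}$ in \eqref{eq:f} is a short sign chase using that all generators have degree $0$ (so $\dagger_n$-type signs vanish). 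On the right side of \eqref{eq:fun}, $m_{\mathscr{C}}^r(\mathscr{F}_{i_1}\otimes\cdots\otimes\mathscr{F}_{i_r})$ with $i_1+\cdots+i_r=m+1$ becomes $m^r_{\mathscr{C}}$ applied to the $f$'s obtained by breaking the chain $I$ into $r$ consecutive sub-chains of lengths $s_1,\dots,s_r$ (i.e. the composition $j_{i_0i_{s_1}}, j_{i_{s_1}i_{s_1+1+\cdots}},\dots$), which is exactly the sum over $\ddagger_r$; the cases $r=2$ and $r\ge 3$ are separated in \eqref{eq:f} only for readability, and the sign $s$ of \eqref{eq:fun} must be shown to equal $1+\epsilon_r$ (resp.\ $1+(m+1)(j-1)$ for $r=2$) after substituting the shift-induced degrees $\deg'=$ (length of the sub-tuple), which is where the formula $\epsilon_r(i_1,\dots,i_r)=\sum_{2\le k\le r}(1-i_k+i_{k-1})i_{k-1}$ comes from.

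The main obstacle is the sign verification: matching $s=\sum_{u=2}^{r}((1-i_u)\sum_{v=1}^{u}i_v)$ from the general functor equation against $1+\epsilon_r$ (and the analogous $r=2$ and $k=2$ signs) requires carefully tracking the Koszul signs introduced by (i) the shift $[1-n]$ in the target of $\mathscr{F}_n$, (ii) the shift $[2-d]$ built into $m^d$, and (iii) the reindexing when one passes from "length of tensor factor" to "number of morphisms composed." I would do this by induction on $m$, or more cleanly by the standard trick of passing to the bar construction: an $\mbox{A}_{\infty}$-functor is a dg-coalgebra map $B({[n]}_{\mathbb{K}})\to B\mathscr{C}$, the bar complex $B({[n]}_{\mathbb{K}})$ of the poset-category $[n]$ is the simplicial (cellular) chain complex of $\Delta^n$ with generators the strictly increasing chains $I$, and its differential is exactly the alternating-sum face map — so \eqref{eq:f} is just the statement that a coalgebra map out of $B({[n]}_{\mathbb{K}})$ is the same as a coherent system $(f_I)$, with the $m^r_{\mathscr{C}}$-terms being the obstruction to $f$ being a strict chain map. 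Framing it this way isolates all the signs into one normalization lemma about the bar differential of ${[n]}_{\mathbb{K}}$, which can be checked once and for all. I will cite \cite{Fao} for the analogous computation in the dg case and adapt it, so the write-up can be kept to the sign lemma plus the translation dictionary above.
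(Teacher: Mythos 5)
Your proposal is correct and follows essentially the same route as the paper's own proof: $\mathscr{F}_0$ is pinned down by the $n+1$ objects, each $\mathscr{F}_m$ is determined by its value $f_I$ on the unique chain of generators because the Hom-spaces of $[n]_{\mathbb{K}}$ are at most one-dimensional, concentrated in degree $0$, and have only $m^2$ nontrivial, and the $\mbox{A}_{\infty}$-functor equation evaluated on such a chain specializes to \eqref{eq:f}. You actually go further than the paper, which asserts that the $f_I$ ``clearly'' satisfy \eqref{eq:f} without any sign verification; your explicit sign chase (or the bar-construction normalization lemma identifying $B([n]_{\mathbb{K}})$ with the chain complex of $\Delta^n$) supplies exactly the check the paper leaves implicit.
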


\begin{proof}
Given an $\mbox{A}_{\infty}$-functor $\mathscr{F}=\mathcal{f}\mathscr{F}^{m}\mathcal{g}^{m\ge0}:\mbox{[$n$]}_{\mathbb{K}}\to\mathscr{C}$ the image of the map $\mathscr{F}_0$ is uniquely determined by $n+1$ objects $\mathcal{f}X_{i}\mathcal{g}_{0\le i\le n}$ in $\mathscr{C}$ because $\mbox{[$n$]}_{\mathbb{K}}$ has exactly $n+1$ objects.
Moreover fixed two integers $i_{-}$ and $i_{+}\in\mbox{[$n$]}_{\mathbb{K}}$ such that $i_{-}<i_{+}$, for every $0\le m\le n$ we consider the map:
$$\mathscr{F}^m:\mbox{Hom}_{\tiny\mbox{[$n$]}_{\mathbb{K}}}(i_{m-1},i_{+})\otimes...\otimes \mbox{Hom}_{\tiny\mbox{[$n$]}_{\mathbb{K}}}(i_{-},i_{1})\to \mbox{Hom}_{\mathscr{C}}\big(\mathscr{F}^0(X_-),\mathscr{F}^0(X_+)\big)[1-m]$$
the unique non-trivial ones are those such that $i_{-}<i_1<i_2<...<i_{m-1}<i_{m}<i_{+}$. So the image of $\mathscr{F}^m$ is non-zero if and only if we have a set $I$ of $m+1$-elements in [$n$]$_{\mathbb{K}}$ such that $I_{}=\mathcal{f}i_{-}<i_1<i_2<...<i_{m-1}<i_{m}<i_{+}\mathcal{g}$. Then $\mathscr{F}^m$ is uniquely determined by the image $f_I=\mathscr{F}^m(j_{i_{m-1}i_{+}},...,j_{i_{-}i_1})$ where $j_{kl}$ denotes the only one non trivial map in $\mbox{Hom}_{\tiny\mbox{[$n$]}_{\mathbb{K}}}(i_{k},i_{l})$, and clearly they satisfy \eqref{eq:f} because they are the image of the $\mbox{A}_{\infty}$-functor $\mathscr{F}$.
\end{proof}

\begin{prp}[]
\label{SS}
Given a map $\alpha:[m]_{\mathbb{K}}\to[n]_{\mathbb{K}}$ in $\Delta$, we have an induced map {$\mbox{Hom}_{\tiny\mbox{A}_{\infty}\mbox{-Cat}}(\alpha,\mathscr{C})$} given by: 
\begin{equation}
\begin{aligned}\notag
\mbox{Hom}_{\tiny\mbox{A}_{\infty}\mbox{-Cat}}(\alpha,\mathscr{C}):\mbox{Hom}_{\tiny\mbox{A}_{\infty}\mbox{-Cat}}({[n]}_{\mathbb{K}},\mathscr{C})&\to \mbox{Hom}_{\tiny\mbox{A}_{\infty}\mbox{-Cat}}({[m]}_{\mathbb{K}},\mathscr{C})\\
(\mathcal{f}X_i\mathcal{g}_{0\le i\le n},\mathcal{f}f_I\mathcal{g}\mathcal{g}) &\mapsto (\mathcal{f}X_{\alpha(j)}\mathcal{g}_{0\le j\le m},\mathcal{f}g_J\mathcal{g}\mathcal{g}).
\end{aligned}
\end{equation}
where $g_J$ is:
$$g_J=
\begin{cases} f_{\alpha(J)}, & \mbox{if $\alpha_{|J}$ is injective} \\ 1_{X_i}, & \mbox{if $J=\mathcal{f}j,j'\mathcal{g}$ and $\alpha(j)=\alpha(j')=X_i$} \\ 0, & \mbox{otherwise},
\end{cases}$$
such that, given $\alpha:{[m]}_{\mathbb{K}}\to{[n]}_{\mathbb{K}}$ and $\beta:{[n]}_{\mathbb{K}}\to{[l]}_{\mathbb{K}}$, then {$$\mbox{Hom}_{\tiny\mbox{A}_{\infty}\mbox{-Cat}}(\beta\cdot\alpha,\mathscr{C})=\mbox{Hom}_{\tiny\mbox{A}_{\infty}\mbox{-Cat}}(\alpha,\mathscr{C})\cdot\mbox{Hom}_{\tiny\mbox{A}_{\infty}\mbox{-Cat}}(\beta,\mathscr{C}).$$
Moreover given $\mbox{Id}:[n]_{\mathbb{K}}\to[n]_{\mathbb{K}}$ then $$\mbox{Hom}_{\tiny\mbox{A}_{\infty}\mbox{-Cat}}(\mbox{Id},\mathscr{C})=\mbox{Id}_{\footnotesize\mbox{Hom}_{\tiny\mbox{A}_{\infty}\mbox{-Cat}}([n],\mathscr{C})}.$$}
\end{prp}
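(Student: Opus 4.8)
The plan is to verify directly that the assignment $\alpha \mapsto \mbox{Hom}_{\tiny\mbox{A}_{\infty}\mbox{-Cat}}(\alpha,\mathscr{C})$, described on objects and morphisms by the stated formula for $g_J$, respects composition and identities, and then to observe that it indeed lands in $\mbox{Hom}_{\tiny\mbox{A}_{\infty}\mbox{-Cat}}([m]_{\mathbb{K}},\mathscr{C})$, i.e.\ that the data $(\{X_{\alpha(j)}\},\{g_J\})$ satisfies the structure equation \eqref{eq:f}. For the identity statement, when $\alpha = \mathrm{Id}_{[n]}$ every restriction $\alpha_{|J}$ is injective and $\alpha(J) = J$, so $g_J = f_J$ and nothing happens; this is immediate from the case analysis in the definition of $g_J$.

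For functoriality under composition, I would fix $\alpha:[m]\to[n]$ and $\beta:[n]\to[l]$, start from a functor classified by $(\{Y_k\}_{0\le k\le l},\{h_K\})$, and compute both $(\beta\alpha)^*$ and $\alpha^*\beta^*$ applied to it, comparing the resulting labels $J\mapsto g_J$ term by term. The objects obviously match since $(\beta\alpha)(j) = \beta(\alpha(j))$. For the morphisms one splits into cases according to the behaviour of $\alpha_{|J}$ and of $\beta_{|\alpha(J)}$: if $\alpha_{|J}$ is injective and $\beta_{|\alpha(J)}$ is injective then $(\beta\alpha)_{|J}$ is injective and both sides give $h_{\beta\alpha(J)}$; if $\alpha_{|J}$ is injective but $\beta$ collapses exactly two consecutive values of $\alpha(J)$, both sides give a unit $1_{Y_i}$ (using the unitality clause (u1)--(u2) to see that the unit produced by $\beta^*$ survives the subsequent relabelling); if $\alpha_{|J}$ is injective and $\beta$ collapses in any more complicated way, both sides vanish; and if $\alpha_{|J}$ is itself non-injective one checks that $\beta^*\alpha^*$ produces either a unit or $0$ exactly as $(\beta\alpha)^*$ does. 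The bookkeeping here is the only laborious part, but it is purely combinatorial: it amounts to chasing which of the three clauses in the definition of $g_J$ is triggered at each stage, and these clauses are set up precisely so that composites of degeneracies and faces behave correctly.

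The point I expect to be the genuine obstacle, as opposed to mere bookkeeping, is checking that $(\{X_{\alpha(j)}\},\{g_J\})$ really does verify \eqref{eq:f}, so that the purported map actually has the claimed codomain. When $\alpha$ is injective this is transparent, since one is just restricting the indexing of an already-valid system of equations. The subtlety is in the degenerate directions, where some $g_J$ are units and others are zero: here one must use the unit axioms (u1) and (u2) — $m^2_{\mathscr{C}}(f,1_x) = f$, $m^2_{\mathscr{C}}(1_x,g) = (-1)^{\deg g}g$, and $m^n_{\mathscr{C}}(\dots,1_x,\dots)=0$ for $n>2$ — to see that the higher terms in \eqref{eq:f} involving a unit either collapse to a lower term or vanish, so that the identity for $g_J$ reduces to the identity for the corresponding $f_I$ with $I$ the injective image. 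The sign discrepancies introduced by (u1) must be tracked against the signs $\epsilon_r$ appearing in \eqref{eq:f}; verifying that these match is the delicate step, and it is where the precise choice of signs in the definition of the $\mbox{A}_{\infty}$-nerve is being used.

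Finally, once these two compatibilities are in hand, the assignment $[n]\mapsto \mbox{Hom}_{\tiny\mbox{A}_{\infty}\mbox{-Cat}}([n]_{\mathbb{K}},\mathscr{C})$ together with the maps $\mbox{Hom}_{\tiny\mbox{A}_{\infty}\mbox{-Cat}}(\alpha,\mathscr{C})$ is by definition a contravariant functor $\Delta \to \mbox{Sets}$, i.e.\ a simplicial set; this is the object that will be called the $\mbox{A}_{\infty}$-nerve $\mbox{N}_{\tiny\mbox{A}_{\infty}}(\mathscr{C})$ in the sequel, and the proposition is exactly the statement that it is well defined.
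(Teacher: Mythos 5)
Your plan would work in principle, but it is a genuinely different, and much heavier, route than the paper's, and as written it stops exactly at the point you yourself flag as delicate. The paper does not verify \eqref{eq:f}, the composition law, or the identity law by hand at the level of the labels $f_I$. Instead it associates to $\alpha:[m]\to[n]$ an explicit unital $\mbox{A}_{\infty}$-functor $\{\alpha_k\}:[m]_{\mathbb{K}}\to[n]_{\mathbb{K}}$ with $\alpha_0=\alpha$, with $\alpha_1$ sending the generator $j_{ls}$ to $j_{\alpha(l)\alpha(s)}$ (interpreted as the unit when $\alpha(l)=\alpha(s)$, and as $0$ in the remaining case), and with $\alpha_k=0$ for $k\ge 2$; the map $\mbox{Hom}_{\tiny\mbox{A}_{\infty}\mbox{-Cat}}(\alpha,\mathscr{C})$ is then simply precomposition $\mathscr{F}\mapsto\mathscr{F}\circ\{\alpha\}$. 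This buys three things at once: (i) the codomain claim, which you correctly single out as the genuine obstacle, becomes automatic, since a composite of unital $\mbox{A}_{\infty}$-functors is again a unital $\mbox{A}_{\infty}$-functor, so no sign-tracking against $\epsilon_r$ in \eqref{eq:f} is ever performed; (ii) the explicit formula for $g_J$ falls out of the composition formula, which collapses to $(\mathscr{F}\circ\alpha)_t=\mathscr{F}_t(\alpha_1,\dots,\alpha_1)$ because $\alpha_k=0$ for $k\neq 1$, combined with unitality of $\mathscr{F}$: preservation of units by $\mathscr{F}_1$ gives the clause $g_J=1_{X_i}$, and $\mathscr{F}_t(\dots,1,\dots)=0$ for $t\ge 2$ gives the clause $g_J=0$; (iii) simplicial functoriality reduces to the evident identities $\{\beta\circ\alpha\}=\{\beta\}\circ\{\alpha\}$ and $\{\mathrm{Id}\}=\mathrm{Id}$ for these essentially strict functors, so your case-by-case comparison of $(\beta\alpha)^*$ with $\alpha^*\beta^*$ is also unnecessary.

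If you wish to keep the direct approach, be aware that the two computations you defer --- the verification that $(\{X_{\alpha(j)}\},\{g_J\})$ satisfies \eqref{eq:f} with the right signs when $\alpha$ has degeneracies, and the composition bookkeeping --- are precisely the content of the statement and are only asserted, not carried out, in your write-up; they are no shorter than the precomposition argument above. So either complete those verifications explicitly (the unit axioms (u1)--(u2) are indeed the right tool, but the $\epsilon_r$ bookkeeping must actually be done), or adopt the paper's formulation, which renders them unnecessary.
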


\begin{proof}
First of all, we want to associate to $\alpha$ an $\mbox{A}_{\infty}$-functor (denoted by $\mathcal{f}\alpha\mathcal{g}$) between the minimal categories $\mbox{[$m$]}_{\mathbb{K}}\to\mbox{[$n$]}_{\mathbb{K}}$. 
We define the $\mbox{A}_{\infty}$-functor $\mathcal{f}\alpha^n\mathcal{g}^{n\ge0}:\mbox{[$m$]}_{\mathbb{K}}\to\mbox{[$n$]}_{\mathbb{K}}$ in the following way:
\begin{itemize}
\item if $k=0$, $\alpha^k=\alpha$, 
\item if $k=1$, \begin{equation}
\begin{aligned}\notag
\alpha^1:\mbox{Hom}_{\tiny\mbox{[$n$]}_{\mathbb{K}}}(l,s)&\to\mbox{Hom}_{\tiny\mbox{[$m$]}_{\mathbb{K}}}(\alpha(l),\alpha(s))\\
j_{ls}&\mapsto \alpha^1(j_{ls})=
\begin{cases} 0, & \mbox{if $l>s$} \\ 
1, & \mbox{if $l=s$} \\ 
j_{\alpha(l)\alpha(s)}, & \mbox{if $l<s$}
\end{cases}.
\end{aligned}
\end{equation}
\item if $k>1$, $\alpha^k=0$.
\end{itemize}
The induced map $\mbox{Hom}_{\tiny\mbox{A}_{\infty}\mbox{-Cat}}(\alpha,\mathscr{C})$ is given by the composition with the $\mbox{A}_{\infty}$-functor $\mathcal{f}\alpha^n\mathcal{g}_{n\ge0}$.
Let $\mathscr{F}\in\mbox{Hom}_{\tiny\mbox{A}_{\infty}\mbox{-Cat}}(\mbox{[$n$]}_{\mathbb{K}},\mathscr{C})$.\ 
For all $t\ge 1$ we have:
$$(\mathscr{F}\alpha)^t=\sum_{r=1}^t\sum_{i_1+...+i_r=t}\mathscr{F}^r(\alpha_{i_r},...,\alpha_{i_1}).$$
Since only $\alpha^1$ is non-trivial. We have $r=t$, $i_1=i_2=...=i_t=1$ and $(\mathscr{F}\alpha)^t$ becomes:
$$(\mathscr{F}\alpha)^t=\mathscr{F}^t(\alpha^{1},...,\alpha^{1}).$$
Therefore 
\begin{align*}
\mathscr{F}^1(\alpha_{1}(j_{i_0i_1}))&=\mathscr{F}^1(j_{\alpha(i_0)\alpha(i_1)})),\\
\mathscr{F}^2(\alpha_{1}(j_{i_0i_1}),\alpha_1(j_{i_1i_2}))&=\mathscr{F}^2(j_{\alpha(i_0)\alpha(i_1)},j_{\alpha(i_1)\alpha(i_2)})),\\ 
&...\\ 
\mathscr{F}^n(\alpha_{1}(j_{i_0i_1}),\alpha^1(j_{i_1i_2}),...,\alpha_{1}(j_{i_{n-1}i_n}))&=\mathscr{F}^n(j_{\alpha(i_0)\alpha(i_1)},j_{\alpha(i^1)\alpha(i_2)},...,j_{\alpha(i_{n-1}),\alpha(i_n)})).
\end{align*}
Of course, $i_k$ are positive integers smaller than $m$ (because $\alpha:\mbox{[$m$]}_{\mathbb{K}}\to\mbox{[$n$]}_{\mathbb{K}}$), so if we take an element in $\mbox{Hom}_{\tiny\mbox{A}_{\infty}\mbox{-Cat}}(\mbox{[$n$]}_{\mathbb{K}},\mathscr{C})$ denoted by $(\mathcal{f}X_i\mathcal{g}_{0\le i\le n},\mathcal{f}f_I\mathcal{g}\mathcal{g})$ this is sent to $(\mathcal{f}X_{\alpha(j)}\mathcal{g}_{0\le j\le m},\mathcal{f}g_J\mathcal{g}\mathcal{g})$ where $g_J$ is:
$$g_J=
\begin{cases} f_{\alpha(J)}, & \mbox{if $\alpha_{|J}$ is injective} \\ 1_{X_i}, & \mbox{if $J=\mathcal{f}j,j'\mathcal{g}$ and $\alpha(j)=\alpha(j')=X_i$} \\ 0, & \mbox{otherwise}
\end{cases}$$
and we are done.
\end{proof}

\begin{defn}[$\mbox{A}_{\infty}$-nerve]
Let $\mathscr{C}$ be an $\mbox{A}_{\infty}$-category. We define the \emph{$\mbox{A}_{\infty}$-nerve} of $\mathscr{C}$ to be the simplicial set (denoted by $\mbox{N}_{\tiny\mbox{A}_{\infty}}(\mathscr{C}$)) such that for all positive integers $n$
$$\mbox{N}_{\tiny\mbox{A}_{\infty}}(\mathscr{C})_n:=\mbox{Hom}_{\tiny\mbox{A}_{\infty}\mbox{-Cat}}(\mbox{[$n$]}_{\mathbb{K}},\mathscr{C}).$$
For every $\alpha:\mbox{[$m$]}\to\mbox{[$n$]}\in\Delta$ the element $(\mathcal{f}X_i\mathcal{g}_{0\le i\le n},\mathcal{f}f_I\mathcal{g}\mathcal{g})$ in $\mbox{N}_{\tiny\mbox{A}_{\infty}}(\mathscr{C})_n$ is sent to $(\mathcal{f}X_{\alpha(j)}\mathcal{g}_{0\le j\le m},\mathcal{f}g_J\mathcal{g}\mathcal{g})$ where $g_J$ is:
$$g_J=
\begin{cases} f_{\alpha(J)}, & \mbox{if $\alpha_{|J}$ is injective} \\ 1_{X_i}, & \mbox{if $J=\mathcal{f}j,j'\mathcal{g}$ and $\alpha(j)=\alpha(j')=X_i$} \\ 0, & \mbox{otherwise}.
\end{cases}$$
\end{defn}

\begin{rem}
\label{NN}
Note that if $\mathscr{C}$ is a dg-category then $\mbox{N}_{\tiny\mbox{A}_{\infty}}(i(\mathscr{C}))=\mbox{N}_{\tiny\mbox{dg}}(\mathscr{C})$ where $\mbox{N}_{\tiny\mbox{dg}}$ is the dg-nerve defined in \cite[\S 1.3.1.6]{Lur1}.
\end{rem}

\begin{thm}
Let $\mathscr{C}$ be an $\mbox{A}_{\infty}$-category, then {$\mbox{N}_{\tiny{\mbox{A}}_{\infty}}(\mathscr{C})$} is an $\infty$-category.\end{thm}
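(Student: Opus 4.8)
The plan is to check directly that $\mbox{N}_{\mbox{A}_{\infty}}(\mathscr{C})$ has the inner horn filling property: for every $n\geq 2$ and every $0<k<n$, an arbitrary morphism of simplicial sets $\phi\colon\Lambda^n_k\to\mbox{N}_{\mbox{A}_{\infty}}(\mathscr{C})$ extends along the inclusion $\Lambda^n_k\hookrightarrow\Delta^n$. By the Proposition computing $\mbox{Hom}_{\mbox{A}_{\infty}\mbox{-Cat}}([n]_{\mathbb{K}},\mathscr{C})$ and by the explicit description of the face maps in Proposition \ref{SS} (a face map simply forgets those $f_I$ whose index set contains the omitted vertex), giving such a $\phi$ is the same as giving objects $X_0,\dots,X_n$ of $\mathscr{C}$ together with morphisms $f_I$ of $\mathscr{C}$ for every subset $I\subseteq\{0,\dots,n\}$ with $|I|\geq 2$ and $I\notin\{\{0,\dots,n\},\ \{0,\dots,n\}\setminus\{k\}\}$ (these are exactly the index sets lying inside some face $d^j$ with $j\neq k$), such that relation \eqref{eq:f} holds for each such $I$. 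Extending $\phi$ to $\Delta^n$ therefore amounts to constructing the two remaining morphisms $f_{\{0,\dots,n\}\setminus\{k\}}$ and $f_{\{0,\dots,n\}}$ so that \eqref{eq:f} holds for these two index sets as well; the other instances of \eqref{eq:f} are unaffected, since neither $\{0,\dots,n\}$ nor $\{0,\dots,n\}\setminus\{k\}$ is contained in any of the index sets already present.

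First I would set $f_{\{0,\dots,n\}}:=0$. In relation \eqref{eq:f} written for $I=\{0<1<\cdots<n\}$ the left-hand side is then $m^1_{\mathscr{C}}(0)=0$, and the only summand of the right-hand side in which the still-unknown morphism $f_{\{0,\dots,n\}\setminus\{k\}}$ can occur is the linear term $\sum_{1\leq j\leq n-1}(-1)^{j-1}f_{\{0,\dots,n\}\setminus\{j\}}$, where it appears with coefficient $(-1)^{k-1}$: indeed every index set occurring inside the $m^2_{\mathscr{C}}$- and $m^r_{\mathscr{C}}$-terms of \eqref{eq:f} is an interval of $\{0,\dots,n\}$, whereas $\{0,\dots,n\}\setminus\{k\}$ is not an interval precisely because $0<k<n$. (This is the one point where innerness of the horn is used, and it is also why $\mbox{N}_{\mbox{A}_{\infty}}(\mathscr{C})$ need not be a Kan complex in general.) Since $(-1)^{k-1}$ is invertible, relation \eqref{eq:f} for $I=\{0,\dots,n\}$ can now be solved uniquely for $f_{\{0,\dots,n\}\setminus\{k\}}$ in terms of the horn data; with this definition \eqref{eq:f} holds for $I=\{0,\dots,n\}$ by construction.

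It remains to verify relation \eqref{eq:f} for $I=\{0,\dots,n\}\setminus\{k\}$. For this I would apply $m^1_{\mathscr{C}}$ to the expression just obtained for $f_{\{0,\dots,n\}\setminus\{k\}}$ and expand: each $m^1_{\mathscr{C}}\big(f_{\{0,\dots,n\}\setminus\{j\}}\big)$ with $j\neq k$ is rewritten using relation \eqref{eq:f} (valid because $\{0,\dots,n\}\setminus\{j\}$ is a face of the horn), and $m^1_{\mathscr{C}}$ applied to each $m^2_{\mathscr{C}}$- and $m^r_{\mathscr{C}}$-term is rewritten using the defining $\mbox{A}_{\infty}$-relations of $\mathscr{C}$ together with \eqref{eq:f} for the (interval) index sets that appear. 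A sign count then shows that every term which is not already present on the right-hand side of \eqref{eq:f} for $\{0,\dots,n\}\setminus\{k\}$ cancels against another such term, so that the two sides agree. Consequently the tuple $(\{X_i\},\{f_I\}_{I\subseteq\{0,\dots,n\}})$ satisfies \eqref{eq:f} for all $I$, hence defines an $n$-simplex of $\mbox{N}_{\mbox{A}_{\infty}}(\mathscr{C})$ restricting to $\phi$ on $\Lambda^n_k$, as required.

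The routine parts are the unpacking of the horn data and the linear-algebra step of the second paragraph; the real obstacle is the cancellation argument of the last paragraph, which is the $\mbox{A}_{\infty}$-analogue of Lurie's verification that the dg-nerve is an $\infty$-category. In fact, when $\mathscr{C}$ is (the $\mbox{A}_{\infty}$-category underlying) a dg-category this last step reduces, via Remark \ref{NN}, to Lurie's computation for $\mbox{N}_{\mbox{dg}}$; the general case requires the higher operations $m^r$ with $r\geq 3$ to be carried through the same argument.
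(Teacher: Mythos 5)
The paper does not prove this statement at all: its ``proof'' is a citation of Faonte (Prop.\ 2.2.12 of \cite{Fao}), whose argument is the $\mbox{A}_{\infty}$-analogue of Lurie's proof that the dg-nerve is an $\infty$-category, and your outline follows exactly that template. Your reduction of the horn-filling problem is correct: a map $\Lambda^n_k\to\mbox{N}_{\tiny\mbox{A}_{\infty}}(\mathscr{C})$ is the data of the $X_i$ and the $f_I$ for all $I$ other than $\{0,\dots,n\}$ and $\{0,\dots,n\}\setminus\{k\}$ subject to \eqref{eq:f}; the old instances of \eqref{eq:f} are untouched by adjoining the two missing morphisms; in the relation for $I=\{0,\dots,n\}$ the unknown $f_{\{0,\dots,n\}\setminus\{k\}}$ occurs only in the linear term with sign $(-1)^{k-1}$, because every block occurring in the $m^2$- and $m^r$-terms is a proper interval while $\{0,\dots,n\}\setminus\{k\}$ is not an interval precisely when $0<k<n$; so setting $f_{\{0,\dots,n\}}=0$ and solving is legitimate. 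This is where innerness enters, as you say.

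The genuine gap is your last paragraph. Verifying \eqref{eq:f} for $I=\{0,\dots,n\}\setminus\{k\}$ is not a routine ``sign count'': it is the entire analytic content of the theorem, and you assert it rather than prove it. One must apply $m^1_{\mathscr{C}}$ to the defining formula for $f_{\{0,\dots,n\}\setminus\{k\}}$, substitute the horn relations for each $m^1_{\mathscr{C}}(f_{\{0,\dots,n\}\setminus\{j\}})$, $j\neq k$, expand $m^1_{\mathscr{C}}$ of all quadratic and higher terms via the $\mbox{A}_{\infty}$-identities, and then show that the surviving terms are exactly the right-hand side of \eqref{eq:f} for $\{0,\dots,n\}\setminus\{k\}$ with the paper's sign conventions ($\epsilon_r$, $\ddagger_r$, and the $(-1)^{1+(m+1)(j-1)}$ in the quadratic term). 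This bookkeeping, with all higher $m^r$ present, is precisely what occupies the bulk of Faonte's proof, and it does not formally reduce to Lurie's dg-computation except when $m^r=0$ for $r\geq 3$, as you yourself note via Remark \ref{NN}. As written, then, your text is a correct strategy and set-up (matching the cited proof), but not yet a proof: either carry out the cancellation explicitly or replace the last paragraph by the citation the paper itself uses.
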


\begin{proof}
\cite[Proposition 2.2.12]{Fao}. 
\end{proof}

\section{Properties of the $\mbox{A}_{\infty}$-nerves}

This section is divided in three parts: in the first one we will give a useful characterization of the mapping space of the $\mbox{A}_\infty$-nerve, in the second we will recall some classical result about model categories, finally we will prove the main theorem of the paper that will be the fundamental tool to give a comparison between the $\mbox{A}_\infty$-categories and the stable $\infty$-categories.

\subsection{Simplicial Objects and DK-correspondence}
Let $\mathcal{A}$ be an abelian category, we denote by $\mbox{Ch}^{\tiny\ge 0}_{\mathcal{A}}$ the category of chain complexes bounded above. In particular if $\mathcal{A}$ is the category of $\mathbb{K}$-modules, we denote by $\mbox{Ch}^{\tiny\ge 0}_{\mathbb{K}}$ the the category of chain complexes of $\mathbb{K}$-modules bounded above.

\begin{defn}[Simplicial Object]
A \emph{simplicial object} $A$ in $\mathcal{A}$ is a functor $A:\Delta^{\tiny\mbox{op}}\to\mathcal{A}$.
\end{defn}

We have a functor $\textbf{N}_{*}:\mbox{Fun}(\Delta^{\tiny\mbox{op}},\mathcal{A})\to \mbox{Ch}^{\tiny\ge 0}_{\mathcal{A}}$ that associates to each simplicial object $A_{\cdot}$ the chain:
\[
\xymatrix{
...\ar[r]&\textbf{N}_2(A)\ar[r]^{A(d_0)}&\textbf{N}_1(A)\ar[r]^{A(d_0)}&\textbf{N}_0(A)\ar[r]&0\ar[r]&...
}
\] 
where:
$$\textbf{N}_n(A_{\cdot}):=\bigcap_{1\le i\le n}\mbox{ker}(A(d_i))$$and $d_j:\mbox{[$n-1$]}\to\mbox{[$n$]}$ is the natural injective map such that $j\not\in\mbox{Im($d_j$)}$.\\

We have also a functor {$\mbox{DK}_\bullet:\mbox{Ch}^{\tiny\ge0}_\mathcal{A}\to\mbox{Fun}(\Delta^{\tiny\mbox{op}},\mathcal{A})$} that associates to each chain ${C}^{\bullet}$ the simplicial object $\mbox{DK}_*(C):\Delta^{\tiny\mbox{op}}\to\mathcal{A}$ defined, for every $n$, to be:
$$\mbox{DK}_n(C):=\displaystyle\bigoplus_{\alpha:\tiny\mbox{[$n$]$\to$[$k$]}}C_k,$$ where $\alpha$ is a surjective map.\\
Moreover, given a map $\beta:\mbox{[$n'$]$\to$[$n$]}$, we define $\mbox{DK}_{\bullet}(\beta)$ to be the matrix with $(\alpha,\alpha')$ entries: $$(f_{\alpha,\alpha'}):\displaystyle\bigoplus_{\alpha}C_k\to\displaystyle\bigoplus_{\alpha'}C_{k'}$$ such that:
$$f_{\alpha,\alpha'}=\begin{cases} 1_{C_k}, & \mbox{if $\alpha$ and $\alpha'$ are fit in a diagram 
\xymatrix{
[n]\ar[r]^{\beta}\ar[d]_{\alpha}&[n']\ar[d]^{\alpha'}\\
[k']\ar@_{=}[r]&[k]
}
} \\ 
d_k, & \mbox{if $\alpha$ and $\alpha'$ are fit in a diagram \xymatrix{
[n]\ar[d]_{\alpha}\ar[r]^{\beta}&[n']\ar[d]^{\alpha'}\\
[k-1]\ar[r]_{d_0}&[k]
}} 
\\ 0, & \mbox{otherwise}.
\end{cases}$$

\begin{thm}[]
\label{DP}
The functors $\mathrm{DK}_\bullet$, {$\textbf{{N}}_{*}$} are adjoints in both directions (i.e. $\mathrm{DK}_\bullet\vdash$ {$\textbf{N}_{*}$} and {$\textbf{N}_{*}$}$\vdash\mathrm{DK}_\bullet$):
\end{thm}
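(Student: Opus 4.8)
The plan is to deduce both adjunctions from the Dold--Kan correspondence, that is, from the fact that $\textbf{N}_{*}$ and $\mathrm{DK}_{\bullet}$ are mutually quasi-inverse equivalences of categories. Once this is in place, the general principle that an equivalence of categories is simultaneously a left adjoint and a right adjoint to its quasi-inverse gives at the same time that $\mathrm{DK}_{\bullet}$ is left adjoint to $\textbf{N}_{*}$ and that $\textbf{N}_{*}$ is left adjoint to $\mathrm{DK}_{\bullet}$. So the real content is to produce natural isomorphisms $\textbf{N}_{*}\circ\mathrm{DK}_{\bullet}\cong\mathrm{id}$ on $\mathrm{Ch}^{\ge 0}_{\mathcal{A}}$ and $\mathrm{DK}_{\bullet}\circ\textbf{N}_{*}\cong\mathrm{id}$ on $\mathrm{Fun}(\Delta^{\mathrm{op}},\mathcal{A})$.

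First I would check $\textbf{N}_{*}\circ\mathrm{DK}_{\bullet}\cong\mathrm{id}$. Fix a chain complex $C$. By definition $\mathrm{DK}_{n}(C)=\bigoplus_{\alpha\colon[n]\twoheadrightarrow[k]}C_{k}$, and the operator attached to a coface $d^{i}\colon[n-1]\to[n]$ acts summand by summand through the matrix recipe stated just before the theorem: on the summand indexed by $\alpha$ one factors $\alpha\circ d^{i}$ in $\Delta$ as a surjection followed by an injection, and the entry is the identity of $C_{k}$ or the differential of $C$ according to which of the two cases of that matrix is realised. A direct inspection then shows that $\bigcap_{1\le i\le n}\ker(\mathrm{DK}_{\bullet}(d^{i}))$ is exactly the summand of $\mathrm{DK}_{n}(C)$ indexed by $\mathrm{id}_{[n]}$, hence equal to $C_{n}$, and that the remaining operator $\mathrm{DK}_{\bullet}(d^{0})$ restricts on it to the differential of $C$. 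Naturality in $C$ is immediate from the functoriality of the matrix formula, so $\textbf{N}_{*}(\mathrm{DK}_{\bullet}(C))\cong C$ as chain complexes, naturally.

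The substantive half is $\mathrm{DK}_{\bullet}\circ\textbf{N}_{*}\cong\mathrm{id}$, and here the key input is the Eilenberg--Zilber / Dold--Kan decomposition lemma: for every simplicial object $A$ and every $n$ the map
\[
\bigoplus_{\alpha\colon[n]\twoheadrightarrow[k]}\textbf{N}_{k}(A)\longrightarrow A_{n},
\]
with component on the $\alpha$-summand given by the restriction of $A(\alpha)$ to $\textbf{N}_{k}(A)$, is an isomorphism. I would prove this by induction on $n$, using the splitting $A_{n}\cong\textbf{N}_{n}(A)\oplus D_{n}(A)$, where $D_{n}(A)$ is the subobject of $A_{n}$ generated by the images of the degeneracy operators, together with the simplicial identities to put iterated degeneracies into a canonical normal form. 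Granting the lemma, $\mathrm{DK}_{n}(\textbf{N}_{*}A)=\bigoplus_{\alpha\colon[n]\twoheadrightarrow[k]}\textbf{N}_{k}(A)$ is identified with $A_{n}$, and it remains to check that for a general $\beta\colon[n']\to[n]$ the operator $\mathrm{DK}_{\bullet}(\beta)$ corresponds under this identification to $A(\beta)$. This is exactly a comparison of two bookkeeping recipes for how the surjection-indexed decomposition of a simplex transforms under an arbitrary simplicial operator, and it is precisely the behaviour that the matrix defining $\mathrm{DK}_{\bullet}(\beta)$ was designed to encode, the two cases ``$1$'' and ``$d_{k}$'' of that matrix reflecting whether the relevant factorization in $\Delta$ keeps the surjective target or drops it by one.

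Finally I would conclude. The two natural isomorphisms exhibit $(\textbf{N}_{*},\mathrm{DK}_{\bullet})$ as an equivalence of categories; by the standard argument one may modify one of the two isomorphisms so that the triangle identities hold, producing an adjoint equivalence, whence $\mathrm{DK}_{\bullet}$ is left adjoint to $\textbf{N}_{*}$. Running the same argument with the roles of the two functors interchanged gives that $\textbf{N}_{*}$ is left adjoint to $\mathrm{DK}_{\bullet}$ as well, so the functors are adjoint in both directions. The main obstacle is the decomposition lemma of the previous paragraph; everything else is routine combinatorial bookkeeping, and a reader content with citations may take the two isomorphisms from any of the standard references on the Dold--Kan correspondence and retain only the closing adjunction argument.
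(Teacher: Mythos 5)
Your argument is correct and amounts to the standard route that the paper itself delegates to its citation of Dold--Puppe: the formulas for $\textbf{N}_{*}$ and $\mathrm{DK}_\bullet$ exhibit the Dold--Kan equivalence, and an equivalence of categories is both left and right adjoint to its quasi-inverse, which is exactly the content of the cited Satz. So the proposal is sound and follows essentially the same approach as the paper, merely unpacking the reference (the decomposition $A_n\cong\bigoplus_{\alpha\colon[n]\twoheadrightarrow[k]}\textbf{N}_k(A)$ being the only nontrivial ingredient, and your inductive sketch for it is the standard one).
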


\begin{proof} 
\cite[Satz 3.6]{DoPu}.
\end{proof}

Let $\textbf{Z}\Delta^n$ denote the free abelian group generated by $\Delta^n[j]$, for every $j$. Let us build the chain associated $\textbf{N}_{*}(\textbf{Z}\Delta^n)$.

\begin{exmp}
\label{Bombolino}
We take $\Delta^0=\mbox{Hom}_{\Delta}(-,\mbox{[$0$]})$, if $n=0$ then $\textbf{N}_0(\textbf{Z}\Delta^0)=\mbox{ker}(\textbf{Z}\Delta^0_0\to 0)=\textbf{Z}\Delta^0_0=\mathcal{f}\mbox{1 generator $g_0$}\mathcal{g}$. If $n=1$, by definition, $\textbf{N}_1(\textbf{Z}\Delta^0)=\mbox{ker}(d^1:\textbf{Z}\Delta^0_1\to \textbf{Z}\Delta^0_0)=0$, because $\textbf{Z}\Delta^0_1$ is generated by $g_{00}$ and $d^1(g_{00})=g_0\not=0$. We can procede in the same way for all the other $n\ge 1$. Hence the chain associated to $\textbf{Z}(\Delta^0)$ is given by: 
\[
\xymatrix{
...\ar[r]&0\ar[r]^{d_0}&0\ar[r]^{d_0}&<g_0>\ar[r]&0\ar[r]&...
}
\] 
\end{exmp}

\begin{exmp}
\label{bombolone}
We take $\Delta^1=\mbox{Hom}_{\Delta}(-,\mbox{[$1$]})$, if $n=0$ we have $\textbf{N}_0(\textbf{Z}\Delta^1)=\mbox{ker}(\textbf{Z}\Delta^1_0\to 0)=\textbf{Z}\Delta^1_0=\mathcal{f}\mbox{2 generators $g_0$ and $g_1$}\mathcal{g}$. If $n=1$ we have $\textbf{N}_1(\textbf{Z}\Delta^1)=\mbox{ker}(d^1:\textbf{Z}\Delta^1_1\to \textbf{Z}\Delta^1_0)$. In $\textbf{Z}\Delta^1_1$ we have three generators $g_{00}$, $g_{01}$ and $g_{11}$ given by the following maps:\\
\[
\xymatrix@R=2mm@C=8mm{
0\ar@{->}[rr]&&0\\
1\ar@{->}[urr]&&1\\
&g_{00}&
}\mbox{              ,             }
\xymatrix@R=2mm@C=8mm{
0\ar@{->}[rr]&&0\\
1\ar@{->}[rr]&&1\\
&g_{01}&
}\mbox{    ,    }
\xymatrix@R=2mm@C=8mm{
0\ar@{->}[drr]&&0\\
1\ar@{->}[rr]&&1\\
&g_{11}&
}
\]

$\textbf{N}_1(\textbf{Z}\Delta^1)$ is given by the elements $\textbf{Z}\Delta^1_1$ of the form $\alpha_{00}g_{00}\oplus\alpha_{01}g_{01}\oplus\alpha_{11}g_{11}$ such that $d^1=0$, where $\alpha_{ij}\in\mathbb{K}$. By definition:

\begin{equation}
\begin{split}
d^1(\alpha_{00}g_{00}\oplus\alpha_{01}g_{01}\oplus\alpha_{11}g_{11})&=\alpha_{00}g_{0}\oplus\alpha_{01}g_{0}\oplus\alpha_{11}g_{1}\\
&=(\alpha_{00}+\alpha_{01})g_{0}\oplus\alpha_{11}g_{1}.
\end{split}
\end{equation}
and it is zero only if $\alpha_{00}+\alpha_{01}=0$ and $\alpha_{11}=0$.\\ 
Hence $\mbox{ker}(\textbf{Z}\Delta^1_1\to \textbf{Z}\Delta^1_0)=<g_{00}-g_{01}>$.\\
Then the associated chain $\textbf{Z}(\Delta^1)$ is given by: 
\[
\xymatrix{
...\ar[r]&0\ar[r]&<g_{00}-g_{01}>\ar[r]^{d_0}&<g_0>\oplus<g_1>\ar[r]&0\ar[r]&...
}
\] 
such that $d^0<g_{00}-g_{01}>=g_{0}-g_{1}$
\end{exmp}

Let  $\mathscr{C}$ be a dg-category and $x$, $y$ two fixed objects in $\mathscr{C}$.
By {Example \ref{Bombolino}} we can identify the homomorphisms of complexes $f:\textbf{N}_{*}(\textbf{Z}\Delta^0)\to\mbox{Hom}_{\mathscr{C}}(x,y)$ with the maps $f:x\to y$ of degree zero such that $df=0$. By {Example \ref{bombolone}}, we can identify the homomorphisms of complexes $f:\textbf{N}_{*}(\textbf{Z}\Delta^1)\to\mbox{Hom}_{\mathscr{C}}(x,y)$ with the set of the maps $f_{02}, f_{12},f_{012}:x\to y$ such that $\mbox{deg }f_{02}=\mbox{deg }f_{12}=0$, $\mbox{deg }f_{012}=-1$, $df_{012}=f_{02}-f_{12}$ and $df_{02}=df_{12}=0$.\\
More generally let us discuss an important lemma (implicitly assumed by Lurie \cite[pg. 66]{Lur2}) which characterizes the maps between $\textbf{N}_{*}(\textbf{Z}\Delta^n)$ and $\mbox{Hom}_{\mathscr{C}}(x,y)$.

\begin{lem}
\label{comb}
We can identify {$f:\textbf{N}_{*}(\textbf{Z}\Delta^n)\to\mbox{Hom}_{\mathscr{C}}(x,y)$} to the maps $f_{I}:x\to y$ of degree $|I|-2$ for all subset $I=\mathcal{f}0\le i_0< ...< i_j<j+1\le n\mathcal{g}$ such that:
\begin{equation}
df_I=\sum_{0\le k\le j}(-1)^{k}f_{I-k}. \tag{$\dagger$}
\end{equation}
\end{lem}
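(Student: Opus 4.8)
The plan is to compute both sides explicitly. On one side, a map $f \colon \mathbf{N}_*(\mathbf{Z}\Delta^n) \to \mathrm{Map}_{\mathscr{C}}(x,y)$ of chain complexes is determined by what it does on a basis of each $\mathbf{N}_k(\mathbf{Z}\Delta^n)$, compatibly with the differentials. So the first step is to identify, for each $k$, a natural basis of the normalized chains $\mathbf{N}_k(\mathbf{Z}\Delta^n) = \bigcap_{1 \le i \le k} \ker(d_i)$ inside $\mathbf{Z}\Delta^n_k$. I would follow the pattern of Examples \ref{Bombolino} and \ref{bombolone}: the $k$-simplices of $\Delta^n$ are order-preserving maps $[k] \to [n]$, and the nondegenerate ones correspond to strictly increasing sequences; one checks by a direct combinatorial computation (induction on $n$, or a standard fact about the normalized chains of a simplicial set) that $\mathbf{N}_k(\mathbf{Z}\Delta^n)$ has as $\mathbb{K}$-basis the classes $e_I$ indexed by subsets $I = \{i_0 < i_1 < \dots < i_k\} \subseteq \{0,\dots,n\}$ of size $k+1$, where $e_I$ is (the alternating sum representing) the corresponding face, and that the only nonzero component of the differential $\mathbf{N}_k \to \mathbf{N}_{k-1}$ applied to $e_I$ picks out the faces obtained by deleting an \emph{interior} vertex, with the usual sign, i.e. $\partial e_I = \sum_{0 < \ell < k} (-1)^{\ell} e_{I \setminus i_\ell}$ — the vertices $i_0$ and $i_k$ survive the $\ker(d_i)$-condition only at the endpoints and are killed. (This is exactly why, in Example \ref{bombolone}, $\mathbf{N}_1$ is generated by $g_{00} - g_{01}$ and its differential lands in the generators.)

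**Reindexing.** The second step is purely bookkeeping: reindex. A basis element $e_I$ with $I = \{i_0 < \dots < i_k\}$, $|I| = k+1$, has homological degree $k$, hence a chain map sends it to an element $f_I \in \mathrm{Map}_{\mathscr{C}}(x,y)$ of cohomological degree $-k = |I| - 2 \cdot ? $ — here one must match the grading conventions of the paper ($\mathrm{Map}_{\mathscr{C}}(x,y)^k = \bigoplus_l \mathrm{Hom}(X^l, Y^{l+k})$, with $m^1$ raising degree by $1$), landing on the stated degree $|I| - 2$ once one accounts for the shift between "number of interior vertices" and "cardinality". The compatibility with differentials, $f(\partial e_I) = m^1_{\mathscr{C}}(f(e_I))$, then reads precisely as the relation $(\Cross)$: $m^1 f_I = \sum_{0 \le k \le j}(-1)^k f_{I-k}$, where the index set on the right corresponds to deleting an interior vertex of $I$ (the displayed formula writes $I = \{0 \le i_0 < \dots < i_j < j+1 \le n\}$, so the "interior" deletions are indexed by $0 \le k \le j$). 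Conversely, any family $\{f_I\}$ satisfying $(\Cross)$ defines a chain map on the chosen basis, and extends uniquely by linearity; one checks it respects all face maps, not just the differential, automatically — but in fact only the differential is relevant since $\mathbf{N}_*$ already encodes everything.

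**Main obstacle.** I expect the genuinely delicate point to be the sign analysis: verifying that the signs produced by (i) the alternating-sum representative of $e_I$ as a face of $\Delta^n$, (ii) the simplicial differential restricted to normalized chains, and (iii) the internal differential $m^1_{\mathscr{C}}$ on $\mathrm{Map}_{\mathscr{C}}(x,y)$ all conspire to give exactly the sign $(-1)^k$ in $(\Cross)$ with no extra factors depending on $i_0$ or $n$. The cleanest way to control this is to not work with $e_I$ as an alternating sum at all, but to use the explicit adjunction $\mathrm{DK}_\bullet \dashv \mathbf{N}_*$ from Theorem \ref{DP}: a map $\mathbf{N}_*(\mathbf{Z}\Delta^n) \to \mathrm{Map}_{\mathscr{C}}(x,y)$ of chain complexes corresponds, up to the relation between $\mathbf{N}_*(\mathbf{Z}\Delta^n)$ and the "boundary-free" model of the normalized chains of $\Delta^n$, to a map out of a chain complex with one free generator $\iota_I$ in each degree and differential $\partial \iota_I = \sum_{0<\ell<k}(-1)^\ell \iota_{I\setminus i_\ell}$ — i.e. the combinatorics is forced and one only has to read off that the induced equation is $(\Cross)$. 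I would state and prove the basis/differential computation of $\mathbf{N}_*(\mathbf{Z}\Delta^n)$ as a small sublemma (or cite a standard reference on normalized chains of standard simplices), then the identification of $(\Cross)$ is immediate; the remaining work is the sign verification, which I would carry out by an explicit induction on $|I|$ anchored by Examples \ref{Bombolino} and \ref{bombolone}.
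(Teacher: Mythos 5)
Your overall strategy is the same as the paper's: compute an explicit basis of $\textbf{N}_j(\textbf{Z}\Delta^n)$ indexed by strictly increasing sequences $i_0<\dots<i_j$ in $\{0,\dots,n\}$, compute the differential on those generators, and read off ($\Cross$). But the central formula you assert is wrong. You claim that the differential of a basis element $e_I$ only involves deletions of \emph{interior} vertices, $\partial e_I=\sum_{0<\ell<k}(-1)^{\ell}e_{I\setminus i_\ell}$, "because the endpoint faces are killed". No face of a nondegenerate simplex of $\Delta^n$ is degenerate: deleting any vertex of a strictly increasing sequence, including the first or the last, again gives a strictly increasing sequence. In the model used here, $\textbf{N}_k=\bigcap_{1\le i\le k}\ker(d_i)$ with differential $d_0$, the inclusion into the full chain complex followed by the quotient by degenerate chains is an isomorphism of complexes, and on the quotient the differential is the \emph{full} alternating boundary; equivalently, expanding $d_0e_I$ in the basis yields $\sum_{0\le\ell\le k}(-1)^{\ell}e_{I\setminus i_\ell}$, with both endpoint deletions present. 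Already Example \ref{bombolone} refutes your formula: the degree-one generator $g_{00}-g_{01}$ of $\textbf{N}_1(\textbf{Z}\Delta^1)$ has $d_0(g_{00}-g_{01})=g_0-g_1\neq 0$, i.e. $df_{012}=f_{02}-f_{12}$ has two terms, whereas your interior-only sum is empty there and would give $df_{012}=0$. The point is not cosmetic: the $k=0$ term of ($\Cross$) (deletion of the initial vertex) is exactly the term that, in the application to Lemma \ref{C}, matches the $m^2_{\mathscr{C}}$-composition with the identity edge coming from the degenerate part of the simplex; with your differential the identification with $(\Cross)$, and hence the comparison with the $\mbox{A}_{\infty}$-nerve, fails. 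Your statement is also internally inconsistent, since you later call the deletions "indexed by $0\le k\le j$" (which include $i_0$) the interior ones.

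Two further points. The degree/indexing bookkeeping you leave as "accounting for the shift" is precisely where the statement's indexing must be unwound: a generator of homological degree $j$ corresponds to $f_I$ with $|I|=j+2$ because the index set in ($\Cross$) carries an extra final entry (the target vertex in the mapping-space application), and the sum over $0\le k\le j$ deletes every element of the generator's actual index set — i.e. it is the full boundary, consistent with the computation above. Finally, your fallback via the $\mathrm{DK}_\bullet\dashv\textbf{N}_{*}$ adjunction does not repair the gap, because you feed into it the same incorrect differential $\partial\iota_I=\sum_{0<\ell<k}(-1)^{\ell}\iota_{I\setminus i_\ell}$; the adjunction only repackages the complex $\textbf{N}_{*}(\textbf{Z}\Delta^n)$, whose differential still has to be computed correctly, which is what the paper's proof does by solving the system $d_ic=0$, $1\le i\le j$, exhibiting the alternating-sum generators and evaluating $d_0$ on them.
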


\begin{proof}
We denote by $g_{i_0...i_j}$ the free generator associated to the map $\mbox{[$j$]}\to\mbox{[$n$]}$ which sends the integer $k\in\mbox{[$j$]}$ to $i_k\in\mbox{[$n$]}$. It follows immediately that
$$\langle\displaystyle\bigoplus_{0\le i_0\le...\le i_j\le n}g_{i_0...i_j}\rangle=\textbf{Z}\Delta^n_j.$$ 
By definition, an element $\displaystyle\bigoplus_{0\le i_0\le...\le i_j\le n}\alpha_{i_0...i_j}g_{i_0...i_j}$ is in $\textbf{N}_j(\textbf{Z}\Delta^n)$ if and only if
\begin{equation}
\label{dj}
\left\{
\begin{split}
d^{j}(\displaystyle\bigoplus_{0\le i_0\le...\le i_j\le n}\alpha_{i_0...i_j}g_{i_0...i_j})&=0\\ 
...\\
d^{1}(\displaystyle\bigoplus_{0\le i_0\le...\le i_j\le n}\alpha_{i_0...i_j}g_{i_0...i_j})&=0
\end{split}
\right.
\end{equation}
Now, if we focus on the first row in (\ref{dj}), we have that

\begin{equation}
\begin{split}
\label{riga1}
d^{j}(\displaystyle\bigoplus_{0\le i_0\le...\le i_j\le n}\alpha_{i_0...i_j}g_{i_0...i_j})=0
\end{split}
\end{equation}
if and only if 
\begin{equation*}
\begin{split}
\displaystyle\sum_{i_j=i_{j-1}+1}^n\alpha_{i_0...i}=-\alpha_{i_0...i_{j-1}i_{j-1}}.
\end{split}
\end{equation*}
So we can rewrite (\ref{dj}) in terms of the following system of $j-1$ equations
\begin{equation}
\label{dj-1}
\left\{
\begin{split}
d^{j-1}(\displaystyle\bigoplus_{0\le i_0\le...\le i_j\le n}\alpha_{i_0...i_j}(g_{i_0i_1...i_{j}}-g_{i_0...i_{j-1}i_{j-1}})
&=0\\
...\\
d^{1}(\displaystyle\bigoplus_{0\le i_0\le...\le i_j\le n}\alpha_{i_0...i_j}(g_{i_0i_1...i_{j}}-g_{i_0...i_{j-1}i_{j-1}})&=0.
\end{split}
\right.
\end{equation}
Proceeding as for the first row, we obtain the following system of $j-2$ equations equivalent to (\ref{dj-1})

\begin{equation}
\label{soad2}
\left\{
\begin{split}
d^{j-2}(\displaystyle\bigoplus_{0\le i_0\le...\le i_j\le n}\alpha_{i_0...i_j}(g_{i_0i_1...i_{j}}-g_{i_0...i_{j-2}i_{j-2}i_j}+\\-(g_{i_0i_1...i_{j-1}}-g_{i_0...i_{j-2}i_{j-2}i_{j-1}}))
&=0\\
...\\
...\\
d^{1}(\displaystyle\bigoplus_{0\le i_0\le...\le i_j\le n}\alpha_{i_0...i_j}(g_{i_0i_1...i_{j}}-g_{i_0...i_{j-2}i_{j-2}i_j}+\\-(g_{i_0i_1...i_{j-1}}-g_{i_0...i_{j-2}i_{j-2}i_{j-1}}))
&=0.
\end{split}
\right.
\end{equation}
We can go on as before by removing one by one the equations from the system. In the end we have that $\displaystyle\bigoplus_{0\le i_0\le...\le i_j\le n}\alpha_{i_0...i_j}g_{i_0...i_j}$ is in $\textbf{N}_j(\textbf{Z}\Delta^n)$ if it is of the form
\begin{equation*}
\begin{split}
\displaystyle\bigoplus_{0\le i_0\le...\le i_j\le n}\alpha_{i_0...i_j}(\displaystyle\sum_{0\le k^0_1,...,k^{j-1}_j \le 1}(-1)^{\triangle_{{i}^{k^0_1}_1...{i}^{k^{j-1}_j}_j}}g_{i_0{i}^{k^0_1}_1...{i}^{k^{j-1}_j}_j})
\end{split}
\end{equation*}
where
$${i}^{k^{l_1}_{l_2}}_l=
\begin{cases} 
i_{l_2}, \mbox{ if $k^{l_1}_{l_2}=0$}\\
i_{l_1}, \mbox{ if $k^{l_1}_{l_2}=1$}
\end{cases}
$$and
$$\triangle_{{i}^{k^0_1}_1...{i}^{k^{j-1}_j}_j}=k^0_1+...+k^{j-1}_j.$$
We note that, if there exists $p$ such that $i_p=i_{p-1}$, then 
\begin{equation*}
\begin{split}
\displaystyle\sum_{0\le k^0_1,...,k^{j-1}_j \le 1}(-1)^{\triangle_{{i}^{k^0_1}_1...{i}^{k^{j-1}_j}_j}}g_{i_0{i}^{k^0_1}_1...{i}^{k^{j-1}_j}_j}=0.
\end{split}
\end{equation*}
This means that $\textbf{N}_j(\textbf{Z}\Delta^n)=0$ if $j>n$. Otherwise $\textbf{N}_j(\textbf{Z}\Delta^n)$ is generated by 
\begin{equation}
\label{tu}
\begin{split}
\displaystyle\bigoplus_{0\le i_0<...< i_j\le n}(\displaystyle\sum_{0\le k^0_1,...,k^{j-1}_j \le 1}(-1)^{\triangle_{{i}^{k^0_1}_1...{i}^{k^{j-1}_j}_j}}g_{i_0{i}^{k^0_1}_1...{i}^{k^{j-1}_j}_j}).
\end{split}
\end{equation}
Now, every map of complexes $f:\textbf{N}_{*}(\textbf{Z}\Delta^n)\to\mbox{Hom}_{\mathscr{C}}(x,y)$ is uniquely determined, for every integer $j$, by the image of the generators in (\ref{tu}). We will denote by $f_{i_0...i_j(j+1)}$ such images.
Moreover $f$ is a chain of complexes. So 
\begin{equation}
\label{differentialcool}
\begin{split}
d^j(f_{i_0...i_j(j+1)})&=f_{j-1}(\displaystyle\sum_{0\le k^0_1,...,k^{j-1}_j \le 1}(-1)^{\triangle_{{i}^{k^0_1}_1...{i}^{k^{j-1}_j}_j}}g_{i^{k^0_1}_1i^{k^1_2}_2...{i}^{k^{j-1}_j}_j}))\\
&=f_{j-1}(\displaystyle\sum_{0\le k^1_2,...,k^{j-1}_j \le 1}(-1)^{\triangle_{{i}^{k^1_2}_2...{i}^{k^{j-1}_j}_j}}(g_{i_1i^{k^1_2}_2...{i}^{k^{j-1}_j}_j}-g_{i_0i^{k^1_2}_2...{i}^{k^{j-1}_j}_j}))\\
&=f_{i_1...i_j(j+1)}-f_{j-1}(\displaystyle\sum_{0\le k^1_2,...,k^{j-1}_j \le 1}(-1)^{\triangle_{{i}^{k^1_2}_2...{i}^{k^{j-1}_j}_j}}(g_{i_0i^{k^1_2}_2...{i}^{k^{j-1}_j}_j})).
\end{split}
\end{equation}
Note that, for every $t$, we have
\begin{equation*}
\begin{split}
g_{i^{k^0_1}_1i^{k^1_2}_2...i^{k^{t-3}_{t-2}}_{t-2}i^{k^{t-1}_t}_ti^{k^{t}_{t+1}}_{t+1}...{i}^{k^{j-1}_j}_j}&=g_{i^{k^0_1}_1i^{k^1_2}_2...i^{k^{t-3}_{t-2}}_{t-2}i_t...{i}^{k^{j-1}_j}_j}-g_{i^{k^0_1}_1i^{k^1_2}_2...i^{k^{t-3}_{t-2}}_{t-2}i_{t-1}...{i}^{k^{j-1}_j}_j}\\
&=g_{i^{k^0_1}_1i^{k^1_2}_2...i^{k^{t-3}_{t-2}}_{t-2}i^{k^{t-2}_{t}}_ti^{k^{t}_{t+1}}_{t+1}...{i}^{k^{j-1}_j}_j}+\\
&-g_{i^{k^0_1}_1i^{k^1_2}_2...i^{k^{t-3}_{t-2}}_{t-2}i^{k^{t-2}_{t-1}}_{t-1}...{i}^{k^{j-1}_j}_j}.
\end{split}
\end{equation*}
This means that equation (\ref{differentialcool}) gives precisely the condition ($\dagger$).
\end{proof}

\begin{rem}
\label{LS}
By Theorem \ref{DP} we have that $$\mbox{Hom}_{}(\textbf{Z}\Delta^n,\mbox{DK}_{\bullet}(\tau_{\ge0}\mbox{Hom}_{\mathscr{C}}(x,y)))\simeq\mbox{Hom}_{\tiny\mbox{Ch}_{\mathbb{K}}}(\textbf{N}_*(\textbf{Z}\Delta^n),\tau_{\ge0}\mbox{Hom}_{\mathscr{C}}(x,y)).$$Using the characterization in Lemma \ref{comb} we have that the morphisms $f_I$ with the property $(\dagger)$ are in bijection with $\mbox{DK}_{n}(\tau_{\ge0}\mbox{Hom}_{\mathscr{C}}(x,y))$.
\end{rem}

\subsection{Model structures}
We briefly recall some classical notions about model structures on categories. A good reference about model structures for the beginners is \cite{Hov}.

\begin{exmp}
\label{T}
The category of (small) dg-categories has two canonically model structures due to Tabuada \cite{Tab1} \cite{Tab2}: the first one has as weak-equivalences the "classical" quasi-equivalences and the second one has as weak-equivalences the Morita equivalences. We recall that $F:\mathscr{C}\to\mathscr{C}'$ is a Morita equivalence if:
\begin{itemize}
\item[(Me1)] $F$ induces an equivalence on perfect-complexes $$\mbox{Ho}(F):\mbox{Ho}(\mbox{pretr}(\mathscr{C}))^{ic}\to\mbox{Ho}(\mbox{pretr}(\mathscr{C}'))^{ic}$$
\item[(Me2)] $\mbox{Hom}_{\mathscr{C}}(x,y)\to\mbox{Hom}_{\mathscr{C}'}(F(x),F(y))$ is a quasi-isomorphism for all $x$, $y\in\mathscr{C}$.
\end{itemize}
Clearly every weak equivalence in the first model structure is a Morita equivalence.  
\end{exmp}
\begin{rem}
A functor between pretriangulated idempotent complete dg-categories is a weak-equivalence if and only if it is a Morita equivalence.
\end{rem}

\begin{exmp}\label{connective}
The category of connective (i.e non negative) chain complexes $\mbox{Ch}^{\ge0}_{\bullet}(R)$ has a model structure such that:
\begin{itemize}
\item[1.] Weak-equivalences are the quasi-isomorphisms.
\item[2.] The cofibrations are the morphisms degreewise injectives with degreewise projective cokernels.
\item[3.] The fibrations are the morphisms degreewise surjective in positive degree.
\end{itemize}
Note that, with this model structure, all the objects are fibrant.
\end{exmp}

\begin{defn}[Weak equivalence \cite{Joy}]\label{stork}
Let $X$, $Y$ be $\infty$-categories, $F:X\to Y$ is a \emph{weak equivalence} if:
\begin{itemize}
\item[1.] $\mbox{Ho}(X)\simeq\mbox{Ho}(Y)$ (as categories),
\item[2.] $\forall x,y\in X$ the geometric realization of the morphism $$\mbox{Hom}^R_{X}(x,y)\to \mbox{Hom}^R_{Y}(F_0(x),F_0(y))$$ is a weak homotopy equivalence of topological spaces.
\end{itemize}
\end{defn}
Weak equivalences together with monomorphisms (i.e. $F_n:X_n\to Y_n$ monomorphisms for all $n>0$) as cofibrations and fibrations, defined by the right left property (cf.\ \cite[Definition 1.1.2]{Hov}), forms a model structure over sSet called \emph{Joyal model structure}. 
\begin{rem}
We can see a simplicial object as a simplicial set by applying the forgetful functor. 
\end{rem} 
Using \cite[Theorem 4]{Qui} we can endow the category of simplicial objects with a model structure defining weak equivalences (resp.\ fibrations) as the morphisms of simplicial objects where the underling functor is a weak equivalence (resp.\ Kan fibrations) of simplicial sets.

\begin{rem}
Let $x$, $y\in\mathscr{C}$ where $\mathscr{C}$ is a dg-category.\ 
There is an isomorphism of simplicial sets
\begin{align*}
\mbox{Hom}^R_{\footnotesize\mbox{N}_{\tiny\mbox{dg}}}(x,y)\simeq \mbox{DK}_\bullet(\tau_{\ge0}\mbox{Hom}_\mathscr{C}(x,y)),
\end{align*}
see \cite[Remark 1.3.1.12]{Lur2}.
\end{rem}

\begin{rem}
\label{DKW}
The functors $\mbox{DK}_\bullet$ and $\textbf{N}_{*}$ match cofibrations, fibrations and weak equivalences in the model structures on $\mbox{Ch}^{\ge 0}_{\mathbb{K}}$ (see Example \ref{connective}) in the above model structure over the simplicial objects $\mbox{Fun}(\Delta^{\tiny\mbox{op}},\mbox{$\mathbb{K}$-Mod})$ \cite[\S 4.1]{SS}. 
\end{rem}

\subsection{Main results}
Now we are ready to prove some new results about $\mbox{A}_{\infty}$-nerves that will be useful to give a comparison between pretriangulated $\mbox{A}_{\infty}$-categories and stable $\infty$-categories in the last section. Let $X$ be a simplicial set and let $x$, $y$ be two elements in $X_0$.

\begin{defn}[Degenerate simplex]
We define the \emph{degenerate $n$-simplex} on $x$ to be the image of $x$ via $X(\sigma)$, where $\sigma:\mbox{[$n$]}\to\mbox{[0]}$.
\end{defn}

\begin{exmp}
A degenerate 2-simplex on $x$ in $\mbox{N}_{\tiny{\mbox{A}}_{\infty}}(\mathscr{C})$ is represented by the following diagram:
\[
\xymatrix{
& x\ar[dr]^{1_x} &\\
x\ar[ur]^{1_x}\ar[rr]^{1_x}\ar@/_/[rr]_0 &&x
}
\]
\end{exmp}

\begin{defn}[Mapping space] 
For every couple of elements of $\mathscr{C}$, we define the \emph{mapping space} $\mbox{Hom}^R_X(x,y)$ to be the $\infty$-category whose $n$-simplexes are the $n+1$-simplexes of $X_{n+1}$ such that $X_{|_{\mathcal{f}n+1\mathcal{g}}}=y$ and $X_{|_{\mathcal{f}0,...,n\mathcal{g}}}$ is the degenerate $n$-simplex on $x$.
\end{defn}

\begin{lem}
\label{C}
Let $\mathscr{C}$ be an $\mbox{A}_{\infty}$-category. The mapping space {{$\mbox{Hom}^R_{\footnotesize\mbox{N}_{\tiny\mbox{A}_{\infty}}(\mathscr{C})}(x,y)$}} is equivalent (as simplicial set) to {$\mbox{DK}_{\bullet}$}$(\tau_{\ge 0}\mbox{Hom}_{\mathscr{C}}(x,y))$.
\end{lem}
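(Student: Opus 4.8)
The plan is to unwind both sides of the claimed equivalence explicitly and exhibit a degreewise bijection that is compatible with the simplicial structure maps. First I would describe the $n$-simplices of $\mbox{Hom}^R_{\footnotesize\mbox{N}_{\tiny\mbox{A}_{\infty}}(\mathscr{C})}(x,y)$: by the Definition of mapping space, these are $(n{+}1)$-simplices of $\mbox{N}_{\tiny\mbox{A}_{\infty}}(\mathscr{C})_{n+1}=\mbox{Hom}_{\tiny\mbox{A}_{\infty}\mbox{-Cat}}(\mbox{[$n{+}1$]}_{\mathbb{K}},\mathscr{C})$ whose restriction to $\{0,\dots,n\}$ is the degenerate simplex on $x$ and whose last vertex is $y$. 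Using the first Proposition of Section 2.3 (the description of $\mbox{Hom}_{\tiny\mbox{A}_{\infty}\mbox{-Cat}}(\mbox{[$n{+}1$]}_{\mathbb{K}},\mathscr{C})$ in terms of objects $\{X_i\}$ and morphisms $\{f_I\}$ satisfying \eqref{eq:f}), the degeneracy condition forces $X_0=\dots=X_n=x$, $X_{n+1}=y$, and forces $f_I=1_x$ when $I\subseteq\{0,\dots,n\}$ with $|I|=2$, $f_I=0$ when $I\subseteq\{0,\dots,n\}$ with $|I|>2$. So the only free data are the morphisms $f_I$ with $i_{m+1}=n{+}1$, i.e.\ subsets of the form $I=\{i_0<\dots<i_m<n{+}1\}$, and for these the structure equation \eqref{eq:f} collapses: the quadratic term $m^2_{\mathscr{C}}(f_{i_j\dots i_{m+1}},f_{i_0\dots i_j})$ survives only when $f_{i_0\dots i_j}$ is a unit $1_x$ (so that term reproduces $f_{i_1\dots i_{m+1}}$ up to sign by (u1)), and all higher $m^r_{\mathscr{C}}$ terms vanish because each factors through a $1_x$ in a slot of arity $>2$ (axiom (u2)); what remains is exactly the relation $m^1_{\mathscr{C}}(f_I)=\sum_{0\le k\le m}(-1)^{?}f_{I-i_k}$.

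Next I would compare this with the other side. By Lemma \ref{comb} and Remark \ref{LS}, the set $\mbox{DK}_n(\tau_{\ge 0}\mbox{Map}_{\mathscr{C}}(x,y))$ is in bijection with collections of morphisms $f_J:x\to y$ of degree $|J|-2$, indexed by subsets $J=\{0\le i_0<\dots<i_j<j{+}1\le n\}$, subject to $df_J=\sum_{0\le k\le j}(-1)^k f_{J-k}$, and this identification is natural in $[n]$ (it comes from the Dold–Kan adjunction of Theorem \ref{DP}). Reindexing a subset $I=\{i_0<\dots<i_m<n{+}1\}\subseteq[n{+}1]$ by deleting the forced final element and relabelling gives a subset of $[n]$ of the same cardinality $m{+}1$, and the degree bookkeeping matches: $f_I$ has degree $(m{+}1)-2=|I|-2$ on one side and $|J|-2$ on the other. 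So the map "$f_I\mapsto f_{I\setminus\{n+1\}}$'' is a bijection between the $n$-simplices of $\mbox{Hom}^R_{\footnotesize\mbox{N}_{\tiny\mbox{A}_{\infty}}(\mathscr{C})}(x,y)$ and the $n$-simplices of $\mbox{DK}_{\bullet}(\tau_{\ge 0}\mbox{Map}_{\mathscr{C}}(x,y))$, provided one checks the two structure equations agree up to the sign conventions.

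The main obstacle, and the step that needs genuine care rather than bookkeeping, is matching signs: the $\mbox{A}_{\infty}$ structure equation \eqref{eq:f} carries the signs $\epsilon_r$ and the Koszul signs from the $m^k_{\mathscr{C}}$, while condition ($\Cross$) in Lemma \ref{comb} has the clean alternating sign $(-1)^k$. I would handle this by writing out \eqref{eq:f} restricted to the indices $I$ with top element $n{+}1$, substituting the units and using (u1)–(u2) to kill all but the $m^1$ and the reproduced $f_{I-i_k}$ terms, and then verifying that the resulting sign in front of $f_{I-i_k}$ is $(-1)^k$ after the reindexing $I\leftrightarrow I\setminus\{n{+}1\}$ — possibly after a harmless rescaling of each $f_I$ by a sign depending only on $|I|$, which does not affect the simplicial identities. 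Finally I would check naturality: for $\alpha:[m]\to[n]$ the face/degeneracy action on $\mbox{Hom}^R_{\footnotesize\mbox{N}_{\tiny\mbox{A}_{\infty}}(\mathscr{C})}(x,y)$ is induced (via Proposition \ref{SS}, applied to the map $[m{+}1]\to[n{+}1]$ fixing the top vertex) by $g_J=f_{\alpha(J)}$ when $\alpha|_J$ is injective, $g_J=1_x$ on a collapsed pair, and $0$ otherwise; this is precisely the description of the simplicial structure maps of $\mbox{DK}_{\bullet}$ of a chain complex spelled out before Theorem \ref{DP}. Hence the bijections assemble into an isomorphism of simplicial sets, which in particular is an equivalence, proving the lemma.
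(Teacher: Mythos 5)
Your proposal follows essentially the same route as the paper's proof: you compute the degenerate simplex to see that all $f_I$ not ending at the top vertex are units or zero, observe that equation \eqref{eq:f} then collapses (via the unit axioms) to the condition $(\Cross)$ of Lemma \ref{comb} up to a sign change, and conclude via the Dold--Kan identification of Remark \ref{LS}. Your extra attention to naturality of the simplicial structure maps is a more explicit version of what the paper delegates to Remark \ref{LS} and Lurie's Lemma 1.2.3.12, so the argument is correct and matches the paper's.
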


\begin{proof}
First of all we compute the degenerate $n$-simplex in $\mbox{N}_{\tiny{\mbox{A}}_{\infty}}(\mathscr{C})$. Let us consider the degenerate map $\sigma:\mbox{[$n$]}\to\mbox{[$0$]}$. Using Theorem \ref{SS}, the image of $x$ in $\mbox{N}_{\tiny
{\mbox{A}}_{\infty}}(\mathscr{C})_n$ via $\mbox{N}_{\tiny\mbox{A}_{\infty}}(\sigma)$ is given by:
\begin{itemize}
\item $n+1$-copies of $x$, because $\alpha(i_0)=...=\alpha(i_n)=0$; 
\item identity maps between $x$, because $\alpha(j_{i_0i_1})=1_{X_{i_0}}$;
\item all the higher maps $f_{i_0i_1i_2}$,... are zeroes, because [$0$] has only one object.
\end{itemize}
By definition we have that, for every integer $n$, $\mbox{Hom}^R_{\footnotesize\mbox{N}_{\tiny\mbox{A}_{\infty}}(\mathscr{C})}(x,y)_n\subset\mbox{N}_{\footnotesize\mbox{A}_{\infty}}(\mathscr{C})_{n+1}$. Then an element of $\mbox{Hom}^R_{\footnotesize\mbox{N}_{\tiny\mbox{A}_{\infty}}(\mathscr{C})}(x,y)_n$ is a set of elements satisfying \eqref{eq:f} for all sets $I=\mathcal{f}0\le i_0<i_1<...<i_{m}<i_{m+1}\le n+1\mathcal{g}$.\\
Now, using the previous calculation on degenerate $n$-simplex, we have that every $f_{i_pi_q}$ with $i_q\not=n+1$ is the identity and every $f_{i_p...i_q}$, with $q\not=n+1$, is $0$.\\
Then we can say that every element in $\mbox{Hom}^R_{\footnotesize\mbox{N}_{\tiny\mbox{A}_{\infty}}(\mathscr{C})}(x,y)_n$ is given by the identity maps on the vertex $x$ and, for all subsets $I=\mathcal{f}0\le i_0<i_1<...<i_{m}<i_{m+1}=n+1\mathcal{g}$, the maps $f_I$ (i.e. the maps with target $y$) satisfy:
$$m_{\mathscr{C}}^1(f_{I})=\sum_{1\le j\le m}(-1)^{j-1}(f_{I-i_j})-(-1)^{0}m^2_{\mathscr{C}}(f_{i_1...i_{m+1}},f_{i_0i_1})+\sum_{r>2}\sum_{\ddagger_{r}}(-1)^{1+\epsilon_r}0.$$ 
This means that
\begin{equation} \notag
\begin{split}
m_{\mathscr{C}}^1(f_{I})&=\sum_{1\le j\le m}(-1)^{j-1}(f_{I-i_j})-(-1)^{0}m^2_{\mathscr{C}}(f_{i_1...i_{m+1}},f_{i_0i_1})+\sum_{r>2}\sum_{\ddagger_{r}}(-1)^{1+\epsilon_r}0\\
&=-f_{i_1...i_{m+1}}+\sum_{1\le j\le m}(-1)^{j-1}(f_{I-i_j})\\
&=\sum_{0\le j\le m}(-1)^{j+1}(f_{I-i_j})
\end{split}
\end{equation}
Hence, after a change of sign, all the maps in $\mbox{Hom}^R_{\footnotesize\mbox{N}_{\tiny{\mbox{A}}_{\infty}}(\mathscr{C})}(x,y)$ satisfy $(\dagger)$ so, using Remark \ref{LS} and Theorem \ref{DP}, we have an isomorphism 
\begin{align*}
\mbox{Hom}^R_{\footnotesize\mbox{N}_{\tiny{\mbox{A}}_{\infty}}(\mathscr{C})}(x,y)\simeq\mbox{DK}_{\bullet}(\tau_{\ge 0}\mbox{Hom}_{\mathscr{C}}(x,y))
\end{align*}
and we are done.
\end{proof}

\begin{thm}[]
\label{M}
Let $\mathscr{C}$ and $\mathscr{D}$ be $\mbox{A}_{\infty}$-categories and let $\mathscr{F}:\mathscr{C}\to\mathscr{D}$ be a quasi-equivalence of $\mbox{A}_{\infty}$-categories.\ Then {$\mbox{N}_{\tiny{\mbox{A}}_{\infty}}(\mathscr{F}):\mbox{N}_{\tiny{\mbox{A}}_{\infty}}(\mathscr{C})\to\mbox{N}_{\tiny{\mbox{A}}_{\infty}}(\mathscr{D})$} is an weak-equivalence in the Joyal model structure.
\end{thm}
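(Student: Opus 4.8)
The plan is to verify the two conditions in the definition of weak equivalence in the Joyal model structure directly, using the description of mapping spaces from Lemma \ref{C}. First I would address the mapping-space condition. Fix $x,y\in\mathscr{C}$. By Lemma \ref{C} we have equivalences $\mbox{Hom}^R_{\footnotesize\mbox{N}_{\tiny\mbox{A}_{\infty}}(\mathscr{C})}(x,y)\simeq\mbox{DK}_\bullet(\tau_{\ge0}\mbox{Map}_{\mathscr{C}}(x,y))$ and $\mbox{Hom}^R_{\footnotesize\mbox{N}_{\tiny\mbox{A}_{\infty}}(\mathscr{D})}(\mathscr{F}x,\mathscr{F}y)\simeq\mbox{DK}_\bullet(\tau_{\ge0}\mbox{Map}_{\mathscr{D}}(\mathscr{F}x,\mathscr{F}y))$, and one checks that under these identifications the map induced by $\mbox{N}_{\tiny\mbox{A}_{\infty}}\mathscr{F}$ on mapping spaces is (equivalent to) $\mbox{DK}_\bullet(\tau_{\ge0}\mathscr{F}^1)$. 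Since $\mathscr{F}$ is a quasi-equivalence, $\mathscr{F}^1\colon\mbox{Hom}^{\cdot}_{\mathscr{C}}(x,y)\to\mbox{Hom}^{\cdot}_{\mathscr{D}}(\mathscr{F}x,\mathscr{F}y)$ is a quasi-isomorphism, hence so is its good truncation $\tau_{\ge0}\mathscr{F}^1$ in $\mbox{Ch}^{\ge0}_{\mathbb{K}}$. By Remark \ref{DKW} the functor $\mbox{DK}_\bullet$ sends quasi-isomorphisms of connective complexes to weak equivalences of simplicial objects, and forgetting down to simplicial sets this is a weak homotopy equivalence; passing to geometric realizations gives a weak homotopy equivalence of topological spaces, which is exactly condition (we2)-type requirement in the definition of Joyal weak equivalence.

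For the homotopy-category condition I would show that $\mbox{Ho}(\mbox{N}_{\tiny\mbox{A}_{\infty}}(\mathscr{C}))$ is equivalent to $\mbox{Ho}(\mathscr{C})$, naturally in $\mathscr{C}$, and then conclude from (we1) for $\mathscr{F}$. Concretely, objects of $\mbox{Ho}(\mbox{N}_{\tiny\mbox{A}_{\infty}}(\mathscr{C}))$ are the objects of $\mathscr{C}$, and by the $n=1$ case of the structure theorem for $\mbox{Hom}_{\tiny\mbox{A}_{\infty}\mbox{-Cat}}([n]_{\mathbb{K}},\mathscr{C})$ a $1$-simplex of $\mbox{N}_{\tiny\mbox{A}_{\infty}}(\mathscr{C})$ from $x$ to $y$ is precisely a degree-$0$ morphism $f_{01}$ with $m^1_{\mathscr{C}}(f_{01})=0$, i.e. a cocycle in $\mbox{Hom}^0_{\mathscr{C}}(x,y)$; the $n=2$ case shows that the homotopy relation on such $1$-simplices is exactly the relation of differing by a coboundary (via $m^1_{\mathscr{C}}(f_{012})=f_{02}-f_{12}$, composition with identities swallowing the $m^2$ terms). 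Hence $\mbox{Map}_{\mbox{N}_{\tiny\mbox{A}_{\infty}}(\mathscr{C})}(x,y)/(\text{homotopy})\cong H^0(\mbox{Hom}_{\mathscr{C}}(x,y))=\mbox{Hom}_{\mbox{Ho}(\mathscr{C})}(x,y)$, and one checks composition is compatible, giving an isomorphism of categories $\mbox{Ho}(\mbox{N}_{\tiny\mbox{A}_{\infty}}(\mathscr{C}))\cong\mbox{Ho}(\mathscr{C})$ natural in $\mathscr{C}$. Then the square relating $\mbox{Ho}(\mbox{N}_{\tiny\mbox{A}_{\infty}}\mathscr{F})$ to $\mbox{Ho}(\mathscr{F})$ commutes, and since $\mbox{Ho}(\mathscr{F})$ is an equivalence by (we1), so is $\mbox{Ho}(\mbox{N}_{\tiny\mbox{A}_{\infty}}\mathscr{F})$.

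The main obstacle I expect is the bookkeeping in the first step: one must carefully identify the map on mapping spaces induced by $\mbox{N}_{\tiny\mbox{A}_{\infty}}\mathscr{F}$ with $\mbox{DK}_\bullet(\tau_{\ge0}\mathscr{F}^1)$ under the equivalences of Lemma \ref{C}. An $\mbox{A}_{\infty}$-functor has higher components $\mathscr{F}^n$ for $n\ge2$, and a priori these could interfere with the induced map on the simplicial mapping space; the point is that, just as the $m^2$ and higher $m^r$ terms drop out in the proof of Lemma \ref{C} because the non-$y$-targeted morphisms are identities or zero, the higher $\mathscr{F}^n(\dots,1_x,\dots)$ vanish by unitality, so on the mapping space only $\mathscr{F}^1$ survives up to the chain homotopy implicit in the Dold–Kan identification. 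Making this precise — ideally by exhibiting a commutative (up to equivalence) square of simplicial objects and invoking Theorem \ref{DP}, Remark \ref{LS} and Remark \ref{DKW} — is the technical heart. Once that square is in place, the two-out-of-three behavior of weak equivalences and the naturality of Lemma \ref{C} finish the argument, and combining with the homotopy-category statement yields that $\mbox{N}_{\tiny\mbox{A}_{\infty}}\mathscr{F}$ is a Joyal weak equivalence.
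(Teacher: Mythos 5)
Your proposal is correct and follows essentially the same route as the paper: identify $\mbox{Ho}(\mbox{N}_{\tiny\mbox{A}_{\infty}}(\mathscr{C}))$ with $\mbox{Ho}(\mathscr{C})$ and use (we1) for the homotopy categories, then use Lemma \ref{C} together with the fact that $\mbox{DK}_\bullet$ preserves quasi-isomorphisms of connective complexes and (we2) for the mapping spaces. Your extra care in checking that the induced map on $\mbox{Hom}^R$ is identified with $\mbox{DK}_\bullet(\tau_{\ge0}\mathscr{F}^1)$ (the higher components $\mathscr{F}^n$ dying by unitality) makes explicit a step the paper leaves implicit, but it is the same argument.
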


\begin{proof}
If $\mathcal{f}\mathscr{F}^n\mathcal{g}$ is a quasi-equivalence then, by definition, the functor induced between the homotopy category $\mbox{Ho}(\mathscr{C})$ and $\mbox{Ho}(\mathscr{D})$ is an equivalence (we1). We observe that the homotopic category of an $\infty$-category $X$ is given by the category having as objects the elements of $X_0$ and as morphisms the elements of $X_1$ that are quotient by the homotopy relation. So $\mbox{Ho}(\mbox{N}_{\tiny{\mbox{A}}_{\infty}}(\mathscr{C}))$ has the same objects as $\mathscr{C}$ and as morphisms the set $Z^0(\mbox{Hom}_{\mathscr{C}}(x,y))$ such that $f\simeq g$ if and only if there exists $h\in\mbox{Hom}_{\mathscr{C}}(x,y)^{-1}$ such that $dh=f-g$. It follows that $\mbox{N}_{\tiny{\mbox{A}}_{\infty}}(\mathscr{F})$ induces an equivalence between the homotopy categories of $\mbox{N}_{\tiny{\mbox{A}}_{\infty}}(\mathscr{C})$ and $\mbox{N}_{\tiny{\mbox{A}}_{\infty}}(\mathscr{D})$.\\
Now we have to prove that, given two objects $x$, $y\in\mathscr{C}$, the map
\begin{equation}
\label{WE}
\mbox{Hom}^R_{\footnotesize{\mbox{N}_{\tiny{\mbox{A}}_{\infty}}(\mathscr{C})}}(x,y)\to \mbox{Hom}^R_{\footnotesize{\mbox{N}_{\tiny{\mbox{A}}_{\infty}}(\mathscr{C})}}(\mathscr{F}^0(x),\mathscr{F}^0(y))
\end{equation}
is an homotopy equivalence between the corresponding Kan complex.\ 
Using Lemma \ref{C}, we have that it is enough to prove that 
\begin{equation}
\mbox{DK}_{\bullet}(\tau_{\ge0}\mbox{Hom}_{\mathscr{C}}(x,y))\to\mbox{DK}_{\bullet}(\tau_{\ge0}\mbox{Hom}_{\mathscr{D}}(\mathscr{F}^0(x),\mathscr{F}^0(y)))
\end{equation}
is a weak equivalence, and this is true because the functor $\mbox{DK}_{\bullet}$ preserves weak equivalences and the map of complexes $\mbox{Hom}_{\mathscr{C}}(x,y)\to\mbox{Hom}_{\mathscr{D}}(\mathscr{F}^0(x),\mathscr{F}^0(y))$, induced by $\mathscr{F}$, is a quasi-isomorphism by (we2).
\end{proof}

\begin{cor}
\label{Coro}
Given an $\mbox{A}_{\infty}$-category $\mathscr{C}$, we have that the following $\infty$-categories are weak-equivalent:
\begin{align*}
\mbox{N}_{\tiny{\mbox{A}}_{\infty}}(\mathscr{C})\simeq\mbox{N}_{\tiny{\mbox{A}}_{\infty}}(\mbox{U}(\mathscr{C}))\simeq\mbox{N}_{\tiny\mbox{dg}}(\mbox{U}(\mathscr{C})).
\end{align*}
\end{cor}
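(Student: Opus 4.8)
The plan is to chain together the two nontrivial equivalences already available. First I would recall that by the Remark following the definition of quasi-equivalence, the $\mbox{A}_{\infty}$-functor $\mbox{Rep}:\mathscr{C}\to\underline{\mbox{Rep}}(\mathscr{C})$ is a quasi-equivalence of unitary $\mbox{A}_{\infty}$-categories. Applying Theorem \ref{M} to this functor immediately yields that $\mbox{N}_{\tiny{\mbox{A}}_{\infty}}(\mbox{Rep})$ is a weak equivalence in the Joyal model structure, giving the first equivalence $\mbox{N}_{\tiny{\mbox{A}}_{\infty}}(\mathscr{C})\simeq\mbox{N}_{\tiny{\mbox{A}}_{\infty}}(\underline{\mbox{Rep}}(\mathscr{C}))$.

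For the second equivalence I would invoke Remark \ref{NN}: since $\underline{\mbox{Rep}}(\mathscr{C})$ is a dg-category (indeed it is a full dg-subcategory of the dg-category of right $\mathscr{A}$-modules, as recorded in the Remark in the Yoneda subsection), its $\mbox{A}_{\infty}$-nerve coincides on the nose with its dg-nerve, i.e. $\mbox{N}_{\tiny{\mbox{A}}_{\infty}}(\underline{\mbox{Rep}}(\mathscr{C}))=\mbox{N}_{\tiny\mbox{dg}}(\underline{\mbox{Rep}}(\mathscr{C}))$. One must be slightly careful that the identification in Remark \ref{NN} is stated for $\mbox{N}_{\tiny\mbox{A}_{\infty}}(i(\mathscr{C}))$ where $i$ is the inclusion of dg-categories into $\mbox{A}_\infty$-categories, so I would note that $\underline{\mbox{Rep}}(\mathscr{C})$, being a dg-category, is already of this form and the equality is literal rather than just a weak equivalence.

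Concatenating, we get $\mbox{N}_{\tiny{\mbox{A}}_{\infty}}(\mathscr{C})\simeq\mbox{N}_{\tiny{\mbox{A}}_{\infty}}(\underline{\mbox{Rep}}(\mathscr{C}))=\mbox{N}_{\tiny\mbox{dg}}(\underline{\mbox{Rep}}(\mathscr{C}))$, which is exactly the claimed statement. There is essentially no obstacle here: the corollary is a pure bookkeeping consequence of Theorem \ref{M} together with the $\mbox{A}_\infty$-Yoneda embedding and the compatibility Remark \ref{NN}. The only point deserving a sentence of justification is the verification that $\mbox{Rep}$ is unitary (so that Theorem \ref{M}, whose hypotheses require unitary $\mbox{A}_\infty$-categories, applies), which follows from the functoriality statement of Theorem 1.? (the one asserting that $\mbox{Rep}$ is an $\mbox{A}_\infty$-functor) together with the unitality of $\underline{\mbox{Rep}}(\mathscr{C})$ as a dg-category.
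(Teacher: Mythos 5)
Your proposal is correct and follows essentially the same route as the paper: the first equivalence comes from Theorem \ref{M} applied to the quasi-equivalence $\mbox{Rep}$, and the second from the identification of the $\mbox{A}_{\infty}$-nerve of a dg-category with its dg-nerve as in Remark \ref{NN}. The paper additionally cites a result of Lurie for the dg-nerve comparison, but this does not change the substance of the argument.
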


\begin{proof}
The first weak-equivalence is a consequence of Theorem \ref{M} using the fact that $\mathscr{C}\to\mbox{U}(\mathscr{C})$ is a weak-equivalence of $\mbox{A}_{\infty}$-categories, the second weak-equivalence is a straightforward consequence of {Remark} \ref{NN}.
\end{proof}

In the case of dg-categories, Lurie proved in \cite[Proposition 1.3.1.20]{Lur2} that the dg-nerve induces a right Quillen functor from the classical model structure on the category of (small) dg-categories (the first one in Example $\ref{T}$) to the Joyal model structure over sSet.\\
On the other hand, in the case of the category of $\mbox{A}_{\infty}$-categories the situation is much different.\ 
If $\mathbb{K}$ is a field, then the category of A$_{\infty}$-algebras has model structure without limits (\cite{Lef}) and Le Grignou proves in \cite{LeG} that $\mbox{N}_{\tiny\mbox{A}_{\infty}}$ preserves weak equivalences and fibrations in such a structure.\ Obviously this correspondence between equivalences and fibrations do not guarantee the existence of a right Quillen functor because of lack of limits (see \cite{COS}).\ For A$_{\infty}$-categories we have the following result:

\begin{thm}[\cite{CO}]\label{fibrazzo}
If $\mathbb{K}$ is a field then, the relative category ($\mbox{A$_{\infty}$Cat}$,$W^{\tiny\mbox{A}_{\infty}}_{\tiny\mbox{qe}}$,$F^{\tiny\mbox{A}_{\infty}}$) of $\Ain$categories, linear over $\mathbb{K}$, is a fibrant object in RelCat.\ 
An A$_{\infty}$functor $\mathscr{F}:\A\to \B$ is a \emph{fibration} if:
\begin{itemize}
\item[(F1)]  For any pair of objects $x,y\in\A$, 
\begin{align*}
\mathscr{F}_1:\A(x,y)\to \B(\F_0(x),\F_0(y))
\end{align*}
is a surjection.
\item[(F2)] 
\begin{align*}
\mbox{Ho}(\mathscr{F})\colon \mbox{Ho}(\A)\to \mbox{Ho}(\B)
\end{align*}
is a quasi-fibration\footnote{see \cite[Proposition 1.3.1.19(F)]{Lur2}}.
\end{itemize}
\end{thm}

\begin{thm}\label{ganzo}
Given a fibration $\mathscr{F}:\mathscr{A}\to\mathscr{B}$ in A$_{\infty}$-cat then $\mbox{N}_{\tiny\mbox{A}_{\infty}}(\mathscr{F})$ is a fibration. 
\end{thm}

\begin{proof}
First we prove that the functor $\mathscr{F}$ induces a quasi-fibration $\mbox{Ho}(\mbox{N}_{\tiny\mbox{A}_{\infty}}(\mathscr{F}))$ of categories.\ 
This follows by \cite[Remark 1.3.1.11 and 1.3.1.9]{Lur2} since $\mbox{Ho}(\mathscr{F})$ is a quasi-fibration of categories.\ 
By \cite[Corollary 2.4.6.5]{Lur1} it remains to prove that $\mbox{N}_{\tiny\mbox{A}_{\infty}}(\mathscr{F})$ is an inner fibration of simplicial sets.\ 
In other words, we must show that every lifting problem
\[
\xymatrix{
\Lambda^n_{j}\ar[r]^-{\phi_0}\ar[d]&\mbox{N}_{\tiny\mbox{A}_{\infty}}(\mathscr{A})\ar[d]^-{\tiny\mbox{N}_{\tiny\mbox{A}_{\infty}}(\mathscr{F})}\\
\Delta^n\ar[r]_-{\overline{\phi}}\ar@{-->}[ur]^{\phi}&\mbox{N}_{\tiny\mbox{A}_{\infty}}(\mathscr{B})
}
\]
for $0<j<n$, admits a solution.\ 
The proof is identical to the horn-filling argument in \cite[Proposition 1.3.1.20]{Lur2} or \cite[Corollary 2]{LeG}.
\end{proof}

Note that, if $\mathbb{K}$ is a commutative ring, then $\mbox{A$_{\infty}$Cat}$ is not a fibrant category, 
so to describe the homotopy category of A$_{\infty}$-categories one can use the semi-free resolutions, see \cite{Orn2}.

It is worth noting that there is a model structure on the faithful (but not full) subcategory $\mbox{A$_{\infty}$Cat}_{\tiny\mbox{strict}}\subset \mbox{A$_{\infty}$Cat}$, whose morphisms are the strict A$_{\infty}$-functors (see \cite[Theorem A]{Orn4}).\ In this model structure, the fibrations are precisely the strict A$_{\infty}$-functors satisfying (F1) and (F2) of Theorem \ref{fibrazzo}. 

\begin{rem}
\label{S}
Given a weak equivalence $F:\mbox{N}_{\tiny{\mbox{A}}_{\infty}}(\mathscr{C})\to\mbox{N}_{\tiny{\mbox{A}}_{\infty}}(\mathscr{C}')$, we have that $F$ induces an equivalence between the homotopy categories $\mbox{Ho}(\mathscr{C})\to\mbox{Ho}(\mathscr{D})$.\ 
In general it is not true that, given a weak equivalence $F:\mbox{N}_{\tiny{\mbox{A}}_{\infty}}(\mathscr{C})\to\mbox{N}_{\tiny{\mbox{A}}_{\infty}}(\mathscr{C}')$, $\mathscr{C}$ and $\mathscr{C}'$ are quasi-equivalent as $\mbox{A}_{\infty}$-categories.
\end{rem}

\section{Stable $\infty$-categories vs pretriangulated $\mbox{A}_{\infty}$-categories} 

In this section we will prove that the pretriangulated $\mbox{A}_{\infty}$-categories identified to the stable $\infty$-categories, via the $\mbox{A}_{\infty}$-nerve.

\begin{thm}[]
\label{Ttr}
Let $\mathscr{A}$ be a pretriangulated $\mbox{A}_{\infty}$-category.\ 
Then {$\mbox{N}_{\tiny{\mbox{A}}_{\infty}}(\mathscr{A})$} is a stable $\infty$-category.\ 
The functor induced between the homotopy categories is an equivalence of triangulated categories.\ 
Moreover, $\mathscr{A}$ is idempotent complete if and only if {$\mbox{N}_{\tiny{\mbox{A}}_{\infty}}(\mathscr{A})$} is an idempotent complete stable $\infty$-category.
\end{thm}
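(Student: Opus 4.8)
The plan is to transport the whole statement to the dg setting via the Yoneda functor $\mathrm{Rep}$ and then to invoke Faonte's theorem on pretriangulated dg-categories. First I would use the commutative square at the end of Section~1 to record that $\mathscr{A}$ is $\mathrm{A}_{\infty}$-pretriangulated if and only if the dg-category $\underline{\mathrm{Rep}}(\mathscr{A})$ is (dg-)pretriangulated, and that in that case $\underline{\mathrm{Rep}}(\mathscr{A})$ is pretriangulated in the sense of Keller/Bondal--Kapranov (cf.\ the Remark identifying the two envelopes). By Corollary~\ref{Coro} one has a chain of Joyal weak equivalences $\mathrm{N}_{\mathrm{A}_{\infty}}(\mathscr{A})\simeq\mathrm{N}_{\mathrm{A}_{\infty}}(\underline{\mathrm{Rep}}(\mathscr{A}))=\mathrm{N}_{\mathrm{dg}}(\underline{\mathrm{Rep}}(\mathscr{A}))$, and Faonte's theorem (\cite{Fao}) says the right-hand term is $\infty$-stable. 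Since (S1)--(S3) only involve the zero object and pullback/pushout squares, all of which are preserved by equivalences of $\infty$-categories, it follows that $\mathrm{N}_{\mathrm{A}_{\infty}}(\mathscr{A})$ is $\infty$-stable; this yields the first assertion.

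For the homotopy categories I would start from the computation already carried out inside the proof of Theorem~\ref{M}: the category $\mathrm{Ho}(\mathrm{N}_{\mathrm{A}_{\infty}}(\mathscr{A}))$ has the objects of $\mathscr{A}$ and morphism sets $H^0(\mathrm{Hom}_{\mathscr{A}}(x,y))$, so there is a tautological isomorphism of categories $\mathrm{Ho}(\mathscr{A})\cong\mathrm{Ho}(\mathrm{N}_{\mathrm{A}_{\infty}}(\mathscr{A}))$. To upgrade it to a triangulated equivalence I would run it along the same chain: $\mathrm{Rep}$, being a quasi-equivalence of pretriangulated $\mathrm{A}_{\infty}$-categories, induces a triangulated equivalence $\mathrm{Ho}(\mathscr{A})\xrightarrow{\sim}\mathrm{Ho}(\underline{\mathrm{Rep}}(\mathscr{A}))$ (the triangulated structure on the homotopy category of a pretriangulated category is canonical, hence natural); Faonte's comparison identifies $\mathrm{Ho}(\underline{\mathrm{Rep}}(\mathscr{A}))$ with $\mathrm{Ho}(\mathrm{N}_{\mathrm{dg}}(\underline{\mathrm{Rep}}(\mathscr{A})))$ triangulatedly; and the equivalence of $\infty$-categories of Corollary~\ref{Coro} induces a triangulated equivalence on homotopy categories. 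The delicate point, and the one I expect to be the main obstacle, is the compatibility with the shift: the shift on $\mathrm{Ho}(\mathscr{A})$ comes from the shift category $\Sigma(\mathscr{A})$ while the shift on $\mathrm{Ho}(\mathrm{N}_{\mathrm{A}_{\infty}}(\mathscr{A}))$ is the suspension $\Sigma$ of the stable structure, and one has to check that under the above identification $x[1]$ corresponds to the cofiber of $x\to 0$, i.e.\ to $\mathrm{Cone}(1_x)$ inside $\mathrm{pretr}_{\mathrm{A}_{\infty}}(\mathscr{A})$, together with the analogous matching of distinguished triangles.

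For the last assertion, by the definition in Section~1 the category $\mathscr{A}$ is idempotent complete precisely when $\mathrm{Ho}(\mathscr{A})$ is, and a stable $\infty$-category is idempotent complete if and only if its homotopy category is (idempotent completion commutes with passage to the homotopy category for stable $\infty$-categories, so a fully faithful functor that is essentially surjective on homotopy categories is an equivalence; cf.\ \cite{Lur2}, \cite{BaSc}). Combining this with the isomorphism $\mathrm{Ho}(\mathscr{A})\cong\mathrm{Ho}(\mathrm{N}_{\mathrm{A}_{\infty}}(\mathscr{A}))$ of the previous paragraph, and with the invariance of idempotent completeness under equivalence of $\infty$-categories, gives the equivalence of the two idempotent-completeness conditions. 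I would close by noting that, apart from the shift/triangle bookkeeping of the second paragraph, everything is a formal consequence of Theorem~\ref{M}, Corollary~\ref{Coro} and Faonte's theorem.
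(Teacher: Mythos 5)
Your proposal follows essentially the same route as the paper: pass to the dg-category $\underline{\mathrm{Rep}}(\mathscr{A})$, invoke Faonte's theorem for the dg-nerve of a pretriangulated dg-category, transfer stability along the weak equivalences of Corollary~\ref{Coro}, and settle idempotent completeness via Lurie's criterion that a stable $\infty$-category is idempotent complete if and only if its homotopy category is. Your second paragraph on matching the shift of $\Sigma(\mathscr{A})$ with the suspension of the stable structure and the corresponding distinguished triangles is in fact more careful than the paper's own proof, which leaves the triangulated-equivalence assertion essentially implicit.
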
 

If $\mathscr{A}$ is a pretriangulated dg-category linear over a field $\mathbb{K}$, then \cite[Theorem 3.18]{Fao}) 
proves that {$\mbox{N}_{\tiny{\mbox{dg}}}(\mathscr{A})$} is a stable $\infty$-category.\
Note that Faonte's proof of this Theorem does not work over a commutative ring.\ 
To extend this result when $\mathbb{K}$ is a commutative ring, we need to give an explicit description of the homotopy pullbacks (see example \ref{homlimcolim}) and pushouts (see example \ref{hompush}).

\begin{exmp}\label{homlimcolim}
First we have that, if $M_{\bullet}\in\mbox{Ch}^{\ge0}_{\bullet}(R)$, there exists a degrgeewise surjective quasi-isomorphism 
$\phi:\tilde{M}_{\bullet}\twoheadrightarrow M_{\bullet}$ where $\tilde{M}_{\bullet}$ is degreewise projective.\\ 
Given $X_{\bullet}\in\mbox{Ch}^{\ge0}_{\bullet}(R)$, we define the disk 
$\mathsf{D}(X_{\bullet})_{\bullet}$ as 
\begin{align*}
\mathsf{D}(X_{\bullet})_{n}=X_{n+1}\oplus X_n
\end{align*}
for $n\in\mathbb{Z}$, the differential is given by
\begin{align*}
d^{\mathsf{D}(X)}_{n}:=
\begin{pmatrix}
d^{X[1]}_{n+1} & \mbox{id}_{X_n} \\
0 & -d^X_{n} 
\end{pmatrix}
.
\end{align*}
Note that given $X_{\bullet}\in\mbox{Ch}^{\ge0}_{\bullet}(R)$ the disk $\mathsf{D}(X_{\bullet})_{\bullet}$ is acyclic.\\
Now, we suppose that $X_0$ is a fibrant object, the homotopy pullback in a model category:
\[
\xymatrix{
X_{2}\times^{h}_{X_0}X_1\ar[r]\ar[d]& X_1\ar[d]^{f_{10}}\\
X_{2}\ar[r]_{f_{20}}& X_0
}
\]
can be computed as the "usual" pullback
\[
\xymatrix{
Z_{2}\times_{X_0}Z_1\ar[r]\ar[d]& Z_1\ar@{->>}[d]^{p_{10}}\\
Z_{2}\ar@{->>}[r]_{p_{20}}& X_0
}
\]
taking the factorization of $f_{i0}:X_i\to X_0$:
\[
\xymatrix{
X_{i0}\ar@{>->}[dr]^{\sim}_-{j_{i0}}\ar[rr]^{f_{i0}}&&X_0\\
&Z_i\ar@{->>}[ur]_-{p_{i0}}&
}
\]
where $j_{i0}$ is a trivial cofibration and $p_{i0}$ is a fibration.\\
In particular, the homotopy pullback of the diagram:
\[
\xymatrix{
&0\ar[d]^-{0}\\
X_{20}\ar[r]_-{f_{20}}&X_0
}
\]
in $\mbox{Ch}^{\ge0}_{\bullet}(R)$ can be computed as the pullback of the diagram:
\[
\xymatrix{
&\mathsf{D}(\tilde{X}_0)_{\bullet}\ar[d]^-{q}\\
X_{20}\oplus \mathsf{D}(\tilde{X}_0)_{\bullet}\ar[r]_-{p}&X_0
}
\]
where 
\begin{align*}
p_n:=\begin{pmatrix}
f_{n} & 0 & \phi_n
\end{pmatrix}
\end{align*} 
and
\begin{align*}
q_n:=\begin{pmatrix}
0 & \phi_n
\end{pmatrix}
.
\end{align*}
More concretely, we can describe the homotopy pullback as:
\begin{align*}
\mbox{Ker}(\gamma):=&\mathcal{f} \big(x,(h_1,h_2),(h_3,h_4)\big)\in X_{20}\oplus \mathsf{D}(\tilde{X}_0)_{\bullet}\oplus \mathsf{D}(\tilde{X}_0)_{\bullet}\mbox{ such that }  f(x) + \phi(h_2) - \phi(h_4)=0 \mathcal{g},
\end{align*}
whose differential is given by:
\begin{align*}
d^{\tiny\mbox{Ker}(\gamma)}_{n}:=
\begin{pmatrix}
 d^{X_{20}}_{n+1} & 0 & 0 & 0 & 0\\
0 & d^{\tilde{X}_0}_{n+1} & \mbox{id}_{\tilde{X}_0} &0 &0 \\
0 & 0 & -d^{\tilde{X}_0}_{n} &0 &0 \\
0 & 0 & 0& d^{\tilde{X}_0}_{n+1} & \mbox{id}_{\tilde{X}_0} \\
0 & 0 & 0 & 0&  -d^{\tilde{X}_0}_{n} 
\end{pmatrix}
.
\end{align*}
Moreover, we have a complex
\begin{align*}
Z_{\gamma}:=&\mathcal{f} \big(x,(h_1,h_2)\big)\in X_{20}\oplus \mathsf{D}(\tilde{X}_0)_{\bullet}
\mbox{ such that }  f(x) + \phi(h_2) =0 \mathcal{g},
\end{align*}
with differential 
\begin{align*}
d^{Z_{\gamma}}_{n}:=
\begin{pmatrix}
 d^{X_{20}}_{n+1} & 0 & 0 \\
0 & d^{\tilde{X}_0}_{n+1} & \mbox{id}_{\tilde{X}_0} \\
0 & 0 & -d^{\tilde{X}_0}_{n} 
\end{pmatrix}
\end{align*}
and a morphism 
$$\psi:=Z_{\gamma}\to \mbox{Ker}(\gamma),$$
defined as 
\begin{align*}
\psi:=
\begin{pmatrix}
\mbox{id} & 0 & 0 \\
0 &\mbox{id} &0 \\
0 & 0 & \mbox{id} \\
0 &0 &0 \\
0 &0 &0 
\end{pmatrix}
.
\end{align*}
This is a quasi-isomorphism, so we can take $Z_{\gamma}=X_{20}\times^h_{X_0} 0$.
\end{exmp}

\begin{exmp}\label{hompush}
Now we give an explicit construction of the homotopy pushout in the category $\mbox{Ch}^{\ge0}_{\bullet}(R)$.\ 
We consider the diagram
\begin{align}\label{minotaz}
\xymatrix{
X_0\ar[d]_{f_{20}}\ar[r]^{f_{10}}&X_{10}\\
X_{20}&
}
\end{align}
the homotopy pushout is given by the chain complex
\begin{align*}
(X_{20}\sqcup^h_{X_0}X_{10})_n:=(X_{20})_n\oplus (X_0)_{n-1}\oplus (X_{10})_n
\end{align*}
with differential
\begin{align*}
d_n:=
\begin{pmatrix}
d^{X_20}_n & -(f_{20})_n & 0\\
0 & -d^{X_0}_{n} & 0\\
0 & (f_{10})_n & d^{X_{10}}_n
\end{pmatrix}
.
\end{align*}
For a detailed explanation see e.g. \cite[Proposition 3.29]{Hor}.
\end{exmp}

\begin{proof}[Proof of Theorem \ref{Ttr}] 
First we note that \cite[3.15 Lemma]{Fao} works over a commutative ring since \cite[Theorem 1.1. (1)]{SS} works over a commutative ring.\ It remains to prove that, given a 1-simplex $f:x\to y$ in $\mbox{N}^{\tiny\mbox{big}}_{\tiny{\mbox{dg}}}(\mathscr{A})$ (see \cite[\S 2.2]{Lur2} or \cite[Remark 2.14]{Fao}) then:
\begin{itemize}
\item[1.] The triangle in $\mbox{N}^{\tiny\mbox{big}}_{\tiny{\mbox{dg}}}(\mathscr{A})$
\begin{align}\label{fiberz}
\xymatrix{
x\ar[d]\ar[r]^f&y\ar[d]^{j}\\
0\ar[r]&\mbox{Cone($f$)}
}
\end{align}
is the cofiber of $f$ and it is cartesian.
\item[2.] The triangle in $\mbox{N}^{\tiny\mbox{big}}_{\tiny{\mbox{dg}}}(\mathscr{A})$
\begin{align}\label{cofiberz}
\xymatrix{
\mbox{Cone($f$)}[1]\ar[d]\ar[r]^-j&x\ar[d]^{f}\\
0\ar[r]&y
}
\end{align}
is the fiber of $f$ and it is cocartesian.
\end{itemize}
In other words, for every closed degree zero morphism $f$ in $\mathscr{A}$, we have to prove that (\ref{fiberz}) is a cofiber, i.e. we need to exhibit a quasi-isomorphism of chain complexes 
\begin{align}
\tau_{\ge0}(\mbox{Hom}_{\mathscr{A}}(\mbox{Cone}(f),z))^{\tiny\mbox{op}}\to \tau_{\ge0}(\mbox{Hom}_{\mathscr{A}}(y,z)^{\tiny\mbox{op}})\times^h_{\tau_{\ge0}(\tiny\mbox{Hom}_{\mathscr{A}}(x,z)^{\tiny\mbox{op}})} 0.
\end{align}
This is equivalent to prove that 
\begin{align}\label{cranchio}
\tau_{\ge0}(\mbox{Hom}_{\mathscr{A}}(\mbox{Cone}(f),z)^{\tiny\mbox{op}})
\end{align}
is quasi-isomorphic to $Z_{\gamma}$ (see Example \ref{homlimcolim}).\\ 
Since $f:x\to y$ is a closed morphism, then
$$\mbox{Cone}(f)_n:=x_{n-1}\oplus y_n,$$
whose differential is given by 
\begin{align*}
d^{\tiny\mbox{Cone}(f)}_{n}:=
\begin{pmatrix}
d &0 \\
f &-d
\end{pmatrix}
.
\end{align*}
Taking $k\ge 0$ we have:
\begin{align}\label{cranchioloz}
(\ref{cranchio}) \simeq 
\mbox{Hom}_{\mathscr{A}}^{-k}(y,z)\oplus \mbox{Hom}_{\mathscr{A}}^{-k-1}(x,z)
\end{align}
with differential 
\begin{align*}
d_{k}:=
\begin{pmatrix}
d^{-k}_{\tiny\mbox{Hom}_{\mathscr{A}}(y,z)} &0 \\
-(\mbox{-})\cdot f &-d^{-k-1}_{\tiny\mbox{Hom}_{\mathscr{A}}(x,z)}
\end{pmatrix}
.
\end{align*}
Setting $X_{20}=\mbox{Hom}_{\mathscr{A}}^{-k}(y,z)$, $X_0=\mbox{Hom}_{\mathscr{A}}^{-k-1}(x,z)$ and 
$(\mbox{-})\cdot f=f_{20}:X_{20}\to X_0$, 
we have a morphism of the chain complexes
$$\Psi:Z_{\gamma}\to X_{20}\oplus X_0[1]$$
where $\Psi$ is given by
\begin{align*}
\Psi_{n}:=
\begin{pmatrix}
\mbox{id} &0 &0 \\
0 &\phi_n &0
\end{pmatrix}
.
\end{align*}
By a direct calculation, one can show that $\Psi$ is a quasi-isomorphism of chain complexes.\\
\\
To prove that (\ref{cofiberz}) is a fiber, we exhibit a quasi-isomorphism in $\mbox{Ch}^{\ge0}_{\bullet}(R)$:
\begin{align*}
\tau_{\ge 0}(\mbox{Hom}_{\mathscr{A}}(y,z)^{\tiny\mbox{op}})&\to
\tau_{\ge0}(\mbox{Hom}_{\mathscr{A}}(x,z)^{\tiny\mbox{op}})\sqcup^h_{\tau_{\ge0}(\tiny\mbox{Hom}_{\mathscr{A}}(\tiny\mbox{Cone}(f)[1],z)^{\tiny\mbox{op}})} 0.
\end{align*}
By Example \ref{hompush} we have that $\tau_{\ge0}(\mbox{Hom}_{\mathscr{A}}(x,z)^{\tiny\mbox{op}})\sqcup^h_{\tau_{\ge0}(\tiny\mbox{Hom}_{\mathscr{A}}(\tiny\mbox{Cone}(f)[1],z)^{\tiny\mbox{op}})} 0$ is given by the chain complex:
\begin{align*}
\tau_{\ge0}(\mbox{Hom}_{\mathscr{A}}(x,z)^{\tiny\mbox{op}})\oplus {\tau_{\ge0}(\mbox{Hom}_{\mathscr{A}}(\mbox{Cone}(f)[1],z)^{\tiny\mbox{op}})}=\\
=\tau_{\ge0}(\mbox{Hom}_{\mathscr{A}}(x,z)^{\tiny\mbox{op}})\oplus {\tau_{\ge0}(\mbox{Hom}_{\mathscr{A}}(x,z)^{\tiny\mbox{op}})}\oplus {\tau_{\ge0}(\mbox{Hom}_{\mathscr{A}}(y[1],z)^{\tiny\mbox{op}})}.
\end{align*}
We have a quasi-isomorphism:
\begin{align*}
{\tau_{\ge0}(\mbox{Hom}_{\mathscr{A}}(y,z)^{\tiny\mbox{op}})}&\to \tau_{\ge0}(\mbox{Hom}_{\mathscr{A}}(x,z)^{\tiny\mbox{op}})\oplus {\tau_{\ge0}(\mbox{Hom}_{\mathscr{A}}(x,z)^{\tiny\mbox{op}})}\oplus {\tau_{\ge0}(\mbox{Hom}_{\mathscr{A}}(y[1],z)^{\tiny\mbox{op}})} \\
\alpha
&\to 
\begin{pmatrix}
\alpha\cdot f &0 &\alpha
\end{pmatrix}.
\end{align*}
It remains to prove that (\ref{fiberz}) is cartesian and (\ref{cofiberz}) is cocartesian.\ 
In other words we need to show that $x$ is homotopy equivalent to $\mbox{Cone}(j)[1]$ and $y$ is homotopy equivalent to $\mbox{Cone}(f)$.\ This is done in the proof of \cite[Theorem 3.18]{Fao}.\\
\\
Now we are ready to extend our proof to the A$_{\infty}$-categories.\ 
If $\mathscr{A}$ is a pretriangulated A$_{\infty}$-category, then ${\mbox{U}}(\mathscr{A})$ is a pretriangulated dg-category.\
By the previous step we have that the dg nerve $\mbox{N}_{\tiny\mbox{dg}}({\mbox{U}}(\mathscr{A}))$ 
is a stable $\infty$-category.\ 
By Corollary \ref{Coro}, we have that $\mbox{N}_{\tiny\mbox{dg}}({\mbox{U}}(\mathscr{A}))$ is weak-equivalent to $\mbox{N}_{\tiny\mbox{A}_{\infty}}(\mathscr{A})$ hence is a stable $\infty$-category.\ 
Moreover, by \cite[Lemma 1.2.4.6]{Lur2}, a stable $\infty$-category is idempotent complete if and only if the homotopy category is idempotent complete, so $\mathscr{A}$ is idempotent complete if and only if $\mbox{N}_{\tiny{\mbox{A}}_{\infty}}(\mathscr{A})$ is idempotent complete, and we are done.
\end{proof} 

We conclude the paper with a question.\ 
In \cite{Orn3}, we defined the (derived) tensor product $\otimes^{\mathbb{L}}$ of two A$_{\infty}$-categories, and we proved that $(\aCat,\otimes^{\mathbb{L}},R)$ forms a homotopy-coherent symmetric monoidal category.\ 
On the other hand, the category of simplicial sets has a symmetric monoidal structure.\
It remains an open question whether the A$_{\infty}$-nerve $\mbox{N}_{\tiny\mbox{A}_{\infty}}$ respects this monoidal structure.

\end{document}